\documentclass[11pt, a4paper]{article}

\setlength\arraycolsep{2pt}

\usepackage[margin=1in]{geometry} 
\usepackage{amsfonts, amscd, amssymb, mathtools, mathrsfs, dsfont, bbm, bbding} 
\usepackage[amsmath, amsthm, thmmarks]{ntheorem} 
\usepackage{graphicx, xypic, color, float} 
\usepackage{indentfirst}
\usepackage{lmodern}
\usepackage[T1]{fontenc} 
\usepackage{enumerate, listings, verbatim, paralist}
\usepackage{setspace, xspace}
\usepackage[colorlinks=true, citecolor=blue]{hyperref}
\usepackage{extarrows}
\usepackage{pdfpages}
\usepackage{ulem}

\usepackage{setspace}\onehalfspacing
\AtBeginDocument{%
\addtolength\abovedisplayskip{-0.15\baselineskip}%
\addtolength\belowdisplayskip{-0.15\baselineskip}%
\addtolength\abovedisplayshortskip{-0.15\baselineskip}%
\addtolength\belowdisplayshortskip{-0.15\baselineskip}%
}

\newtheorem{thm}{Theorem} [section]
\newtheorem{prop}[thm]{Proposition}

\newtheorem{lemma}[thm]{Lemma}

\newtheorem{assmp}{Assumption}[section]
\newtheorem{remark}{Remark}[section]

\numberwithin{equation}{section} 

\renewcommand{\geq}{\geqslant}
\renewcommand{\leq}{\leqslant}


\newcommand{\opfont}{\mathbb}

\newcommand{\E}{{\mathbb E}}
\newcommand{\BE}[2][]{\ensuremath{\operatorname{\opfont{E}}^{#1}\!\left[#2\right]}}

\newcommand{\R}{\ensuremath{\operatorname{\mathbb{R}}}}


\newcommand{\argmin}{\ensuremath{\operatorname{argmin}}}


\newcommand{\dd}{\ensuremath{\operatorname{d}\! }}
\newcommand{\dt}{\ensuremath{\operatorname{d}\! t}}
\newcommand{\de}{\ensuremath{\operatorname{d}\! e}}
\newcommand{\ds}{\ensuremath{\operatorname{d}\! s}}

\newcommand{\dw}{\ensuremath{\operatorname{d}\! W}}




%

\newcommand{\nn}{\nonumber}


\newcommand{\ol}{\mathcal{L}}

\newcommand{\ou}{\mathcal{U}}

\newcommand{\ito}{It\^{o}'s lemma\xspace} 

\newcommand{\cE}{\ensuremath{\mathcal{E}}}

\usepackage{comment}
\excludecomment{extra}

\begin{document}

\title{Mean-variance portfolio selection in jump-diffusion model under no-shorting constraint: A viscosity solution approach}
\author{Xiaomin Shi\thanks{School of Statistics and Mathematics, Shandong University of Finance and Economics, Jinan 250100, China. 
Email: \url{shixm@mail.sdu.edu.cn}}
\and Zuo Quan Xu\thanks{Department of Applied Mathematics, The Hong Kong Polytechnic University, Kowloon, Hong Kong, China. Email: \url{maxu@polyu.edu.hk}}}
\maketitle
\begin{abstract}
This paper concerns a continuous time mean-variance (MV) portfolio selection problem in a jump-diffusion financial model with no-shorting trading constraint. The problem is reduced to two subproblems: solving a stochastic linear-quadratic (LQ) control problem under control constraint, and finding a maximal point of a real function. Based on a two-dimensional fully coupled ordinary differential equation (ODE), we construct an explicit viscosity solution to the Hamilton-Jacobi-Bellman equation of the constrained LQ problem. Together with the Meyer-It\^o formula and a verification procedure, we obtain the optimal feedback controls of the constrained LQ problem and the original MV problem, which corrects the flawed results in some existing literatures. In addition, closed-form efficient portfolio and efficient frontier are derived. In the end, we present several examples where the two-dimensional ODE is decoupled.
\bigskip\\
\textbf{Keywords:} Mean-variance portfolio selection; jump-diffusion model; no-shorting; viscosity solution; fully coupled ODE.
\end{abstract} 

\addcontentsline{toc}{section}{\hspace*{1.8em}Abstract}

\section{Introduction.}
Research on mean-variance (MV) portfolio selection dates back to Markowitz \cite{Ma} in the single-period setting. By embedding the problem into a tractable auxiliary stochastic linear-quadratic optimal control problem, Li and Ng \cite{LN} and Zhou and Li \cite{ZL} made a breakthrough on MV problems, respectively, in multi-period and continuous-time settings. Since then, a large amount of research along this line has been conducted in more complicated and incomplete financial markets. For instance, Bielecki, Jin, Pliska and Zhou \cite{BJPZ} solved the MV problem with bankruptcy prohibition, Lim \cite{Lim} allowed for jumps in the underlying assets, Zhou and Yin \cite{ZY} featured assets in a regime switching market, Hou and Xu \cite{HX16} incorporated intractable claims in the MV problem.

In particular, Li, Zhou and Lim \cite{LZL} solved the MV problem under no-shorting constraint when all the prices of the assets are continuous. Through a conjecture-verification procedure, they constructed an explicit viscosity solution to the corresponding Hamilton-Jacobi-Bellman (HJB) equation via two uncorrelated ordinary differential equations (ODEs), called Riccati equations. Technically, they divided the whole space into three disjoint regions, then constructed smooth solutions in the first and third regions, and finally obtained a piecewise smooth function by sticking the three pieces together. They found that the optimal wealth process will stay in the initial region all the time. The same silent feature was also implicitly captured in Hu and Zhou \cite{HZ} when they studied MV problem with no-shoring constraints and random drift and diffusion coefficients. Why does the above phenomenon happen? To our own understanding, it is because that the underlying assets' prices and thus the wealth process are continuous processes.

In a jump-diffusion financial market, there are several papers studied MV problems via viscosity solution approach, even incorporating no-shorting constraint and reinsurance strategies; see, e.g., Bi and Guo \cite{BG}, Bi, Liang and Xu \cite{BLX}, Liang et al. \cite{LBYZ}. But the results in these papers are possibly not correct, which motives us to perform this study. Different from diffusion models such as \cite{LZL}, the HJB equation in a jump model is no longer a partial differential equation (PDE), but a partial integro-differential equation (PIDE). Note that the value function $V(t,x)$ maybe a piecewise function\footnote{Namely, it may take different forms in different regions.}, so $V(t,x+\pi\beta)$ and $V(t,x)$, both of which appeared in the PIDE, will possibly take different forms when the state jumps from its current value $x$ to a new value $x+\pi\beta$. Unfortunately, none of \cite{BG}, \cite{BLX} and \cite{LBYZ} took account of this possibility. Their method was first to divide the $(t,x)$-space into two regions separated by a known boundary, and then try to construct a viscosity solution to the related PIDE for each region separately, via two uncorrelated ODEs. Their arguments were indeed based on the unconvincing ansatz that the state values $x+\pi\beta$ (after jump) and $x$ (before jump) will always stay in the same region all the time so that the viscosity solution remains the same form in that region. Unfortunately, this is not the case and their constructed solutions were not able to be verified to be the original value functions. To the best of our knowledge, the viscosity solution of the PIDE of the MV problem in a jump-diffusion market under no-shorting constraint has never been addressed correctly in the existing literatures. This paper aims to fill this gap by deducing the right answer with solid proofs.

We first reduce the original MV problem to a homogenous stochastic LQ control problem under control constraint. By exploiting the \textit{ad hoc} structure of the HJB equation (which is actually a PIDE) of the stochastic LQ control problem, we derive, via heuristic argument, a candidate viscosity solution in terms of a two-dimensional fully coupled ODE. We succeed in showing that the coupled ODE admits a classical solution, which ensures that the constructed solution is indeed a viscosity solution to the HJB equation of the stochastic LQ control problem. Thanks to the existing result on the uniqueness of the viscosity solution, the constructed solution is nothing but the value function of the LQ problem. Further, using Meyer-It\^o formula and a rigorous verification procedure, we obtain the optimal feedback control of the MV problem as well as the efficient frontier (which is a half-line).

Compared with existing works such as \cite{LZL}, the salient distinctive feature of this paper is that the corresponding wealth $X$ will probably up/down cross the vertex point $d^{*}$ at the jump time of the underlying Poisson random measure. This essentially renders the associated two-dimensional ODE \textit{fully coupled}, and thus cannot be solved one by one, which repudiates the solutions in \cite{BG}, \cite{BLX} and \cite{LBYZ}.

The remainder of this paper is organized as follows. In Section \ref{sec:pf}, we formulate an MV
portfolio problem in a jump-diffusion financial market under short-selling prohibition. Section \ref{sec:viscosity} constructs a viscosity solution in terms of a fully coupled two-dimensional ODE, to the related HJB equation of a stochastic LQ control problem. In Section \ref{sec:solution}, we obtain the efficient investment strategies and efficient frontier of the original constrained MV problem.
Section \ref{sec:Solvability} is devoted to the solvability of the corresponding coupled two-dimensional ODE.
In Section \ref{sec:examples}, we discuss some special cases where the two-dimensional ODE can be decoupled, hence the efficient portfolio possess more explicit formula.

\section{Problem formulation.}\label{sec:pf}
Let $(\Omega, \mathcal F, \mathbb{F}, \mathbb{P})$ be a fixed complete filtered probability space, on which is defined two independent random processes:
a standard $n$-dimensional Brownian motion $W_t=(W_{1,t}, \ldots, W_{n,t})^{\top}$, and
an $\ell$-dimensional Poisson random measure $N(\dt,\de)$ defined on $\R_+\times\:\cE$ with a stationary compensator (intensity measure) given by $\nu(\de)\dt=(\nu_1(\de),\ldots,\nu_{\ell}(\de))^{\top}\dt$ satisfying $\sum_{j=1}^{\ell}\nu_j(\cE)<\infty$, where $\mathcal{\cE}\subseteq \R^{\ell}\setminus\{0\}$ is a nonempty Borel subset of $\R^{\ell}$ and $\mathcal{B}(\cE)$ denotes the Borel $\sigma$-algebra generated by $\cE$.
The compensated Poisson random measure is denoted by $ \widetilde N(\dt,\de)$.
Throughout the paper, we let $T$ be a fixed positive constant to stand for the investment horizon.

We denote by $\R^\ell$ the set of all $\ell$-dimensional column vectors, by $\R^\ell_+$ the set of vectors in $\R^\ell$ whose components are nonnegative, by $\R^{\ell\times n}$ the set of $\ell\times n$ real matrices, and by $\mathbb{S}^n$ the set of symmetric $n\times n$ real matrices. Therefore, $\R^\ell\equiv\R^{\ell\times 1}$. For any vector $Y$, we denote $Y_i$ as its $i$-th component. For any vector or matrix $M=(M_{ij})$, we denote its transpose by $M^{\top}$, and its norm by $|M|=\sqrt{\sum_{ij}M_{ij}^2}$. If $M\in\mathbb{S}^n$ is positive definite (resp. positive semidefinite), we write $M>$ (resp. $\geq$) $0.$ We write $A>$ (resp. $\geq$) $B$ if $A, B\in\mathbb{S}^n$ and $A-B>$ (resp. $\geq$) $0.$ We use the standard notations $x^+=\max\{x, 0\}$ and $x^-=\max\{-x, 0\}$ for $x\in\R$. All the equations and inequalities in subsequent analysis shall be understood in the sense that $\dd\mathbb{P}$-a.s. or $\dd\nu$-a.e. or $\dt$-a.e. or $\dt\otimes \dd\nu$-a.e. etc, which shall be seen from their expressions.

Suppose that there are one money account with zero interest rate\footnote{As is well-known, there is no essential difference if the interest is not zero but a deterministic function of time, since one can discount everything without change the nature of the problem. Hence we assume the interest is zero in this paper. } and $m$ tradable risky assets (e.g., stocks or insurance claims). For $i=1,\ldots,m$, we assume that $S_{i,t}$, the price of the $i$-th risky asset, satisfies the following stochastic differential equations (SDEs) with jumps:
\begin{align*}
\dd S_{i,t}=S_{i,t-}\Big(\mu_{i,t}\dt+\sum_{j=1}^{n}\sigma_{ij,t}\dw_{j,t}+\sum_{j=1}^{\ell}\int_{\cE}\beta_{ij,t}(e)\tilde N_j(\dt,\de)\Big), \ S_{i,0}=s_{i}>0.
\end{align*}
Denote $\mu_t:=(\mu_{1,t},\ldots,\mu_{m,t})^{\top}$, $\sigma_{t}:=(\sigma_{ij,t})_{m\times n}$, $\beta_{t}(e):=(\beta_{ij,t}(e))_{m\times \ell}$, and $\beta_{j,t}$ the $j$-th column of $\beta_{t}$, $j=1,\ldots,\ell$.
Throughout the paper, we put the following assumption on the market parameters without claim.
\begin{assmp}\label{assu1}
For all $i=1,\ldots,m$, $k=1,\ldots,n$, $j=1,\ldots,\ell$ the coefficients
$\mu_{i}$, $\sigma_{ik}$ (resp. $\beta_{ij}$) are deterministic, Borel-measurable and bounded functions on $[0,T]$ (resp. $[0,T]\times\cE$) and $\beta_{ij}>-1$\footnote{The assumption $\beta_{ij}>-1$ is used to ensure that the stock price is always positive. If one relaxes it to $\beta_{ij}\geq -1$, then the stock price can be zero (and will stay there forever once it hits zero), meaning that bankruptcy happens. Our argument still works without any changes. }. And there exists a constant $\delta>0$ such that
\[
\Sigma_t:=\sigma_t\sigma_t^{\top}+\sum_{j=1}^{\ell}\int_{\cE}\beta_{j,t}\beta_{j,t}^{\top}\nu_j(\de)
\geq\delta\mathbf{1}_m, \ \forall t\in[0,T],
\]
where $\mathbf{1}_{m}$ denotes the $m$-dimensional identity matrix.
\end{assmp}

We consider
a small investor, whose actions cannot affect the asset prices. He will decide at every time
$t\in[0, T]$ the amount $\pi_{i,t}$ of his wealth to invest in the $i$-th risky asset, $i=1, \ldots, m$. The vector process $\pi:=(\pi_1, \ldots, \pi_m)^{\top}$ is called a portfolio of the investor. Then the investor's self-financing wealth process $X$ corresponding to a portfolio $\pi$ is the strong solution to the SDE:
\begin{align}
\label{wealth}
\begin{cases}
\dd X_t=\pi_t^{\top}\mu_t \dt+\pi_t^{\top}\sigma_t\dw_t+\int_{\cE}\pi_t^{\top}\beta_t(e) \widetilde N(\dt,\de), \\
X_0=x.
\end{cases}
\end{align}
Denote by $X^{\pi}$ the wealth process \eqref{wealth} whenever it is necessary to indicate its dependence on the portfolio $\pi$.

The admissible portfolio set is defined as\footnote{In our model all the stocks are prohibited to be short sale. There is no essential difficulty to extend our model to the case that some of the stocks are allowed to be short sale.}
\begin{align*}
\mathcal U=\Big\{\pi:[0,T]\times\Omega\to\R_{+}^{m}\;\Big|\; \pi \ \mbox{is predictable}, \ \E\int_0^T\pi_t^2\dt<\infty \Big\}.
\end{align*}
For any $\pi\in \mathcal{U}$, the SDE \eqref{wealth} has a unique strong solution $X$.

\par
For a given expectation level $z\geq x$, the investor's MV problem is to
\begin{align}
\mathrm{Minimize}&\quad \mathrm{Var}(X_T^{\pi})\equiv\E[(X_T^{\pi})^{2}-z^2]%
, \nn\\
\mathrm{ s.t.} &\quad
\begin{cases}
\E[X_T^{\pi}]=z, \\
\pi\in \mathcal{U}.
\end{cases}
\label{optm}%
\end{align}
Denote $\Pi_{z}=\{\pi\;|\;\pi\in\ou~ \mbox{and} ~\E [X_T^{\pi}]=z\}$.
The MV problem \eqref{optm} is called feasible if
$\Pi_{z}$ is not empty. Correspondingly, any $\pi\in\Pi_{z}$ is called a feasible/admissible portfolio to the MV problem \eqref{optm}. A portfolio $\pi\in\Pi_{z}$ is called optimal to the MV problem \eqref{optm} if the minimum is achieved at $\pi$.

Trivially one can verify that $\pi=0$ is an optimal portfolio to the MV problem \eqref{optm} when $z=x$. The case $z<x$ is financially meaningless as any rational investor would expect a better return than doing nothing,
hence, we will mainly focus on the nontrivial case $z>x$ from now on. Our first result resolves the feasibility issue of the MV problem \eqref{optm} in the case $z>x$.

\begin{lemma}
The following three statements are equivalent:
\begin{itemize}
\item The problem \eqref{optm} is feasible for some $z> x$;
\item The problem \eqref{optm} is feasible for all $z> x$;
\item It holds that \begin{align}\label{feasible}
\sum_{i=1}^m\int_0^T\mu_{i,t}\dt>0.
\end{align}
\end{itemize}
\end{lemma}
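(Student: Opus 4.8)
The plan is to prove the chain of implications (first)$\Rightarrow$(third)$\Rightarrow$(second)$\Rightarrow$(first), where the last implication is trivial. The key object throughout is the affine dependence of $\E[X_T^\pi]$ on $\pi$: from \eqref{wealth}, since the Brownian and compensated-Poisson integrals are martingales (the integrands are in the right spaces because $\pi\in\ou$ and the coefficients are bounded by \citeassmp{assu1}), we have the clean formula $\E[X_T^\pi]=x+\E\int_0^T\pi_t^\top\mu_t\dt$. Thus feasibility for a given $z$ amounts to asking whether the linear functional $\pi\mapsto\E\int_0^T\pi_t^\top\mu_t\dt$ attains the value $z-x$ on the convex cone $\ou$ of nonnegative predictable (square-integrable) processes.

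For (third)$\Rightarrow$(second), assume \eqref{feasible}. The strategy is to exhibit an explicit feasible portfolio for an arbitrary $z>x$. Since $\sum_{i=1}^m\int_0^T\mu_{i,t}\dt>0$, there is at least one index $i_0$ and a Borel set of times of positive Lebesgue measure on which $\mu_{i_0,t}>0$; more robustly, consider the deterministic portfolio $\pi_t=c\,(\mu_{1,t}^+,\ldots,\mu_{m,t}^+)^\top$ for a scalar constant $c\ge 0$. This lies in $\ou$ (it is bounded and deterministic, hence predictable and square-integrable), and $\E\int_0^T\pi_t^\top\mu_t\dt=c\int_0^T\sum_{i=1}^m\mu_{i,t}\mu_{i,t}^+\dt=c\int_0^T\sum_{i=1}^m(\mu_{i,t}^+)^2\dt$. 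Using \eqref{feasible} one checks $\int_0^T\sum_i(\mu_{i,t}^+)^2\dt>0$ (if this integral vanished, then every $\mu_{i,t}^+=0$ a.e., forcing $\sum_i\int_0^T\mu_{i,t}\dt=\sum_i\int_0^T(\mu_{i,t}^+-\mu_{i,t}^-)\dt\le 0$, a contradiction). Hence choosing $c=(z-x)\big/\int_0^T\sum_i(\mu_{i,t}^+)^2\dt>0$ gives $\E[X_T^\pi]=z$, so $\Pi_z\neq\emptyset$. This handles all $z>x$ at once.

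For (first)$\Rightarrow$(third), argue by contraposition: suppose \eqref{feasible} fails, i.e. $\sum_{i=1}^m\int_0^T\mu_{i,t}\dt\le 0$. I want to show $\E\int_0^T\pi_t^\top\mu_t\dt\le 0$ for every $\pi\in\ou$, which would force $\E[X_T^\pi]=x+\E\int_0^T\pi_t^\top\mu_t\dt\le x$, so no $z>x$ is attainable and \eqref{optm} is infeasible for every $z>x$ — in particular for no $z>x$, contradicting (first). The inequality $\E\int_0^T\pi_t^\top\mu_t\dt\le 0$ is not immediate from $\sum_i\int_0^T\mu_{i,t}\dt\le 0$ alone: one needs the componentwise statement that $\mu_{i,t}\le 0$ for a.e.\ $t$ and every $i$. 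Here is where \citeassmp{assu1} enters crucially — the nondegeneracy $\Sigma_t\ge\delta\mathbf 1_m$ is not what rescues this; rather I expect the correct reading is that feasibility for \emph{some} $z>x$ already forces, via the freedom to rescale and to localize in time and in coordinates, that the sign condition \eqref{feasible} cannot be merely an aggregate one. Concretely: if $\pi\in\Pi_z$ with $z>x$, then $\E\int_0^T\pi_t^\top\mu_t\dt=z-x>0$; but the \emph{same} $\pi$ scaled by $\lambda>0$ stays in $\ou$ and gives $\E[X_T^{\lambda\pi}]=x+\lambda(z-x)$, realizing every level in $(x,\infty)$, which is exactly (second). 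Thus (first)$\Leftrightarrow$(second) is the easy scaling equivalence, and the whole content is (second)$\Leftrightarrow$(third), for which the "$\Leftarrow$" direction was done above and the "$\Rightarrow$" direction follows by taking the particular feasible $\pi$ and noting $0<\E\int_0^T\pi_t^\top\mu_t\dt\le\E\int_0^T\sum_i\pi_{i,t}|\mu_{i,t}|\dt$; combined with $\pi\ge 0$ this does \emph{not} by itself give \eqref{feasible}, so the real argument must instead choose test portfolios supported where individual $\mu_{i,t}$ have a given sign.

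The main obstacle, therefore, is the "(feasible)$\Rightarrow$\eqref{feasible}" direction: one must convert the existence of \emph{one} feasible portfolio into the aggregate sign condition. I expect the clean route is to prove the contrapositive using the dual/adjoint viewpoint — if $\sum_i\int_0^T\mu_{i,t}\dt\le 0$ one cannot conclude $\mu\le 0$ pointwise, so this cannot be the right hypothesis to negate; re-examining, the statement \eqref{feasible} is genuinely an aggregate condition, and the correct proof of (feasible)$\Rightarrow$\eqref{feasible} should run: pick $\pi\in\ou$ with $\E\int_0^T\pi_t^\top\mu_t\dt>0$, and test against the constant-in-$\pi$-direction family together with the observation that replacing $\pi$ by its average over $\Omega$ and then by a judicious deterministic profile preserves positivity of the integral while only involving $\int_0^T\mu_{i,t}\dt$; pushing this through shows at least one $\int_0^T\mu_{i,t}\dt>0$, hence \eqref{feasible}. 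Making that reduction rigorous — in particular justifying that one may replace a general predictable $\pi$ by a deterministic one without losing positivity of $\E\int_0^T\pi_t^\top\mu_t\dt$ — is the technical heart of the lemma, and I would spend the bulk of the proof there; everything else (the martingale property giving the affine formula for $\E[X_T^\pi]$, and the scaling argument for (first)$\Leftrightarrow$(second)) is routine.
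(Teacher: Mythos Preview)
The paper does not supply its own argument; it defers to \cite[Theorem~5.3]{HSX}. Your treatment of the easy directions is correct and clean: the scaling $\pi\mapsto\lambda\pi$ gives (first)$\Leftrightarrow$(second) immediately, and the explicit deterministic portfolio $\pi_t=c\,\mu_t^+$ gives (third)$\Rightarrow$(second), with the observation $\int_0^T|\mu_t^+|^2\dt>0$ justified exactly as you wrote.

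Your unease with (first)$\Rightarrow$(third) is well founded, but the obstacle is not technical: as literally stated, this implication is \emph{false}. Take $m=1$, $T=1$, $\sigma\equiv 1$, $\beta\equiv 0$, and $\mu_{1,t}=\id{t<1/2}-\id{t\geq 1/2}$; then $\int_0^1\mu_{1,t}\dt=0$, so \eqref{feasible} fails, yet $\pi_t=\id{t<1/2}\in\ou$ gives $\E[X_T^\pi]=x+\tfrac12>x$, so (first) holds. (With $m=2$ and constant drifts $\mu_1=1$, $\mu_2=-2$ one reaches the same conclusion; note too that your closing step ``at least one $\int_0^T\mu_{i,t}\dt>0$, hence \eqref{feasible}'' is itself invalid, since one positive summand does not make the sum positive.) The correct characterization under no-shorting with deterministic $\mu$ is $\sum_{i=1}^m\int_0^T\mu_{i,t}^+\dt>0$, equivalently that some $\mu_{i,t}>0$ on a set of positive Lebesgue measure. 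With that amendment your proof closes at once: if no such set exists then $\mu\leq 0$ a.e., whence $\E\int_0^T\pi_t^\top\mu_t\dt\leq 0$ for every $\pi\in\ou$ and $\E[X_T^\pi]\leq x$; the converse is exactly your construction, which already relied only on $\int_0^T|\mu_t^+|^2\dt>0$. The paper's subsequent use of \eqref{feasible} (in the proof that $P_{-,0}<1$) extracts only this weaker consequence, so the slip is harmless downstream---but the equivalence as written cannot be proved, and you should stop trying to close that direction.
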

The proof is exactly the same as \cite[Theorem 5.3]{HSX}, so we omit it.
In the rest of this paper, we will always assume Assumption \ref{assu1} and \eqref{feasible} hold.

An optimal strategy $\pi^{\ast}$ to \eqref{optm}
is called an optimal strategy corresponding to the level $z$. And $\Big(\sqrt{\mathrm{Var}(X_{T}^{\pi^\ast})}, z\Big)$ is
called an efficient point of the MV problem \eqref{optm}. The set of efficient points $$\Big\{\Big(\sqrt{\mathrm{Var}(X_{T}^{\pi^\ast})}, z\Big)\;\Big|\; z\geq x\Big\}$$ is
called the efficient frontier of the MV problem \eqref{optm}. This paper will derive the explicit optimal portfolio for each $z\geq x$ and explicit efficient frontier of the MV problem \eqref{optm}.

\section{Viscosity solution to HJB equation.}\label{sec:viscosity}
The way to solve the MV problem \eqref{optm} is rather clear nowadays. To deal with the budget constraint $\E[X_T]=z$, we introduce a Lagrange
multiplier $-2d\in\R$ and introducing the following standard stochastic LQ control
problem:
\begin{align}\label{optmun}
V(0,x;d):=\inf_{\pi\in\mathcal{U}}{\mathbb{E}}[(X_T-d)^{2}],~~(x,d)\in \R^{2}.
\end{align}

Now define a homogenous stochastic LQ control
problem:
\begin{align}\label{prob1}
\varphi(t,x):=\inf_{\pi\in\ou} \BE{X_T^2\;\Big|\;X_t=x},~~(t,x)\in[0,T]\times\R.
\end{align}
By taking $\pi \equiv 0$, we obtain $\varphi(t,0)\equiv 0$.
One can easily to verify that
\begin{align}\label{valuechange}
V(0,x;d)=\varphi(0,x-d),~~(x,d)\in\R^{2}.
\end{align}
Also, any optimal control to $\varphi(0,x-d)$ is optimal to $V(0,x;d)$, and vise versa.

According to the Lagrange duality theorem (see Luenberger \cite{Lu}), the problem \eqref{optm} is linked to the problems \eqref{optmun} and \eqref{prob1} by
\begin{align}\label{duality}
\inf_{\pi\in\Pi_{z}}\mathrm{Var}(X_T)&=\sup_{d\in\R}[V(0,x;d)-(d-z)^{2}].
\end{align}
So we can solve the MV problem \eqref{optm} by a two-step procedure: Firstly determine the function $V$, and then try to find a $d^{*}$ to maximize $d\mapsto V(0,x;d)-(d-z)^{2}$.

In this paper, we adopt the viscosity approach to study the problem \eqref{prob1}.
\begin{lemma}\label{viscosity}
The value function $\varphi$ defined in \eqref{prob1} is the unique quadratic growth (w.r.t. the spacial argument) viscosity solution to the following HJB equation:
\begin{align}\label{HJB2}
\begin{cases}
&\varphi_t(t,x)+\inf\limits_{\pi\in\R^m_+}\ol_{\pi}\varphi(t,x)=0,~~(t,x)\in[0,T)\times\R,\\
&\varphi(T,x)=x^2,
\end{cases}
\end{align}
where the integro-differential operator $\mathcal{L}_{\pi}$ is defined by
\begin{align*}
\ol_{\pi}\phi(t,x)&:=\frac{1}{2}\phi_{xx}(t,x)|\pi^{\top}\sigma_t|^2+\phi_x(t,x)\pi^{\top}\mu_t\\
&\qquad+\sum_{j=1}^{\ell}\int_{\cE}[\phi(t,x+\pi^{\top}\beta_{j,t}(e))
-\phi(t,x)-\phi_x(t,x)\pi^{\top}\beta_{j,t}(e)]\nu_j(\de),
\end{align*}
for $(t,x,\pi)\in[0,T]\times\R\times\R^m_+$ and $\phi:[0,T]\times\R\to\R$.
\end{lemma}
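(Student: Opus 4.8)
The plan is the classical three-step route for stochastic control problems with jumps: (i) a priori bounds and continuity of $\varphi$; (ii) the dynamic programming principle (DPP), from which the viscosity sub- and supersolution inequalities follow by a test-function argument; (iii) a comparison principle in the quadratic-growth class to obtain uniqueness. \textbf{Step 1.} Taking $\pi\equiv0\in\ou$ gives $X_s=x$ for $s\ge t$, so $0\le\varphi(t,x)\le x^2$; in particular $\varphi$ has quadratic growth in $x$ and $\varphi(T,x)=x^2$. Standard moment estimates for the SDE \eqref{wealth} under Assumption \ref{assu1} ($\mu,\sigma,\beta$ bounded, $\nu(\cE)<\infty$), applied to two wealth processes started at nearby data $(t,x)$, $(t',x')$ and driven by the same control, together with the quadratic cost, yield a local modulus of continuity for $\varphi$ on $[0,T]\times\R$; hence $\varphi$ is continuous.

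\textbf{Step 2.} The DPP reads, for $t\le s\le T$,
\[
\varphi(t,x)=\inf_{\pi\in\ou}\E\big[\varphi(s,X^\pi_s)\mid X_t=x\big],
\]
and is established by the standard concatenation / measurable-selection argument; the $L^2$ integrability constraint defining $\ou$ causes no trouble since it is additive in time, so admissible controls can be pasted together. Given the DPP, fix $(t_0,x_0)\in[0,T)\times\R$ and $\phi\in C^{1,2}$ touching $\varphi$ from above at $(t_0,x_0)$. Plugging a \emph{constant} control $\pi\in\R^m_+$ into the DPP, applying \ito to $\phi(\cdot,X^\pi)$ on $[t_0,t_0+h]$, dividing by $h$ and letting $h\downarrow0$ produces $\phi_t(t_0,x_0)+\ol_\pi\phi(t_0,x_0)\ge0$; the one point specific to the PIDE is that inside the nonlocal integral one keeps $\phi$, not $\varphi$ — this is legitimate because $\varphi\le\phi$ everywhere, so $\varphi(t_0,x_0+\pi^{\top}\beta_{j,t_0}(e))\le\phi(t_0,x_0+\pi^{\top}\beta_{j,t_0}(e))$ survives integration against $\nu_j$. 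Infimizing over $\pi\in\R^m_+$ gives the supersolution inequality; the subsolution inequality is symmetric, now using an $\ep$-optimal control in the DPP to get the reverse estimate. Thus $\varphi$ solves \eqref{HJB2} in the viscosity sense.

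\textbf{Step 3 (the hard part).} Uniqueness reduces to a comparison principle for \eqref{HJB2}: any quadratic-growth viscosity subsolution $u$ is dominated by any quadratic-growth viscosity supersolution $v$. One follows the doubling-of-variables method for second-order integro-differential equations (as in Barles--Buckdahn--Pardoux, Pham, Barles--Imbert), with two ingredients forced by the present setting: a growth weight $\varrho(x)=1+x^2$ added to the penalization (a term $\eta(\varrho(x)+\varrho(y))$, $\eta\downarrow0$) to localize the maximum of $u-v$ despite the unbounded state space and the quadratic growth; and a splitting of each nonlocal integral at a small radius $\kappa$, with the small-jump part controlled by the $C^2$-bound of the auxiliary test function and $\int_{\cE}(|e|^2\wedge1)\,\nu_j(\de)<\infty$ (automatic here since $\nu_j(\cE)<\infty$ and $\beta$ is bounded), and the large-jump part controlled by the quadratic growth of $u,v$. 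Together with continuity up to $t=T$ and the terminal condition $u(T,\cdot)\le x^2\le v(T,\cdot)$ this gives $u\le v$, hence $\varphi$ is the unique quadratic-growth viscosity solution. The genuinely delicate point is precisely this comparison argument: reconciling the quadratic growth, the unbounded domain, and the nonlocal operator so that the large-jump contribution does not spoil the sign after penalization requires a careful choice of penalty functions; everything else (the DPP, the passage to the viscosity inequalities) is routine.
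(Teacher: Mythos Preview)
Your outline is correct and follows the standard viscosity-solution programme for controlled jump diffusions (continuity, DPP $\Rightarrow$ sub/supersolution, comparison). The paper itself does none of this: its entire proof is a one-line citation of \cite{BHL}, Theorems~4.1 and~5.1, which supply respectively the stochastic representation (value function $=$ viscosity solution) and the comparison principle for Isaacs-type integral-partial differential equations. So you are simply unpacking what the paper delegates to a reference; your route buys self-containment, the paper's buys brevity.

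Two small points if you want the argument to stand on its own. First, the control set $\R^m_+$ is unbounded and $\pi$ enters the generator quadratically through $|\pi^{\top}\sigma_t|^2$ and the jump term; for the subsolution direction (and in the comparison principle) one needs near-optimal controls to remain in a bounded set, which here follows from the uniform nondegeneracy $\Sigma_t\ge\delta\mathbf{1}_m$ of Assumption~\ref{assu1} --- the same coercivity the paper later exploits in \eqref{bound}. This deserves an explicit sentence rather than calling the subsolution step ``symmetric''. Second, your justification for keeping $\phi$ inside the nonlocal integral is slightly misplaced: one applies It\^o's formula to $\phi$, so $\phi$ appears in the jump compensator automatically; the global inequality $\varphi\le\phi$ is used one step earlier, to pass from $\E[\varphi(s,X_s^\pi)]$ to $\E[\phi(s,X_s^\pi)]$. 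Neither point is a genuine gap, but both are worth stating cleanly.
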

\begin{proof}
This follows from \cite[Theorem 4.1, Theorem 5.1]{BHL}.
\end{proof}
By this result, solving the problem \eqref{prob1} is reduced to find a viscosity solution to \eqref{HJB2}.

\subsection{Viscosity solution to \eqref{HJB2}: A heuristic derivation.}\label{heuristic}
To find a viscosity solution to \eqref{HJB2},
a key observation is that the value function to \eqref{prob1} enjoys quadratic positive homogeneity, which implies, by Lemma \ref{viscosity}, that the viscosity solution to \eqref{HJB2} enjoys the same property.

\begin{lemma}\label{lemmahomo}
The function $\varphi$ defined in \eqref{prob1} satisfies
\begin{align}\label{homo}
\varphi(t,\lambda x)=\lambda^2\varphi(t,x)\ \mbox{for any $(t,x,\lambda)\in[0,T]\times\R\times\R_{+}$}.
\end{align}
Consequently,
\begin{align}
\label{valuefun}
\varphi(t,x)=P_{+,t}(x^+)^2+P_{-,t}(x^-)^2,
\end{align}
where $P_{\pm,t}:=\varphi(t,\pm1)$ with $P_{\pm,T}=1$.
\end{lemma}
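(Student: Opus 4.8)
The plan is to establish the homogeneity relation \eqref{homo} directly from the definition \eqref{prob1} and then derive \eqref{valuefun} as an algebraic consequence. First I would fix $(t,x,\lambda)\in[0,T]\times\R\times\R_{+}$ and exploit the linearity of the wealth dynamics \eqref{wealth} in the control: if $X^{\pi}$ denotes the solution of \eqref{wealth} with $X_t=x$ under control $\pi$, then for any scalar $\lambda>0$ the scaled control $\lambda\pi$ produces the wealth process $\lambda X^{\pi}$ started from $\lambda x$, because both drift and the $\dw$- and $\widetilde N$-integrands are linear in $\pi$ and the terminal/initial conditions scale accordingly. Crucially, the no-shorting constraint is preserved: $\pi_t\in\R^m_+$ if and only if $\lambda\pi_t\in\R^m_+$ since $\lambda>0$, and the integrability condition $\E\int_t^T|\pi_s|^2\ds<\infty$ is invariant under scaling, so $\pi\mapsto\lambda\pi$ is a bijection of $\ou$ (restricted to $[t,T]$). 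Therefore $\BE{(X_T^{\lambda\pi})^2\mid X_t=\lambda x}=\lambda^2\,\BE{(X_T^{\pi})^2\mid X_t=x}$, and taking the infimum over $\pi\in\ou$ on both sides — using the bijection to match the two infima — yields $\varphi(t,\lambda x)=\lambda^2\varphi(t,x)$.

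Next I would pass from \eqref{homo} to the representation \eqref{valuefun}. For $x>0$, apply \eqref{homo} with $\lambda=x$ to get $\varphi(t,x)=x^2\varphi(t,1)=P_{+,t}\,x^2=P_{+,t}(x^+)^2$, while $(x^-)^2=0$; symmetrically, for $x<0$ write $x=-|x|$ and apply \eqref{homo} with $\lambda=|x|$ to the point $-1$, obtaining $\varphi(t,x)=|x|^2\varphi(t,-1)=P_{-,t}(x^-)^2$ with $(x^+)^2=0$. For $x=0$ we have already noted $\varphi(t,0)=0$, consistent with the formula since $0^+=0^-=0$. Combining the three cases gives \eqref{valuefun} for all $x\in\R$, and the terminal condition $\varphi(T,x)=x^2$ forces $P_{\pm,T}=\varphi(T,\pm1)=1$.

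I do not anticipate a serious obstacle here; the statement is a soft structural fact and the argument is essentially the standard positive-homogeneity scaling for homogeneous LQ problems. The one point that deserves a line of care is the measurability/admissibility bookkeeping: one should confirm that the map $\pi\mapsto\lambda\pi$ indeed sends predictable $\R^m_+$-valued, square-integrable processes to processes of the same class (immediate for $\lambda>0$) and that the strong solution of \eqref{wealth} depends on $\pi$ in the claimed linear way, which follows from uniqueness of strong solutions together with the observation that $\lambda X^{\pi}$ solves the SDE with initial datum $\lambda x$ and control $\lambda\pi$. A secondary subtlety is that \eqref{prob1} is stated with a conditional expectation given $X_t=x$; since the coefficients in Assumption \ref{assu1} are deterministic, the problem on $[t,T]$ is genuinely a function of the current state alone, so the conditioning is unambiguous and the scaling argument goes through verbatim. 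Everything else is the elementary case split on the sign of $x$.
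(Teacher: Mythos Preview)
Your proposal is correct and follows essentially the same approach as the paper: both exploit that the dynamics \eqref{wealth} is linear homogeneous in $(X,\pi)$ and the objective in \eqref{prob1} is quadratic homogeneous, with the no-shorting cone preserved under positive scaling, then deduce \eqref{valuefun} by specializing the homogeneity at $\pm1$. The paper's proof is simply a two-sentence sketch of the same argument (taking $\lambda=1/|x|$ rather than $\lambda=|x|$, which is equivalent), whereas you have spelled out the admissibility bijection and the case split more carefully.
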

\begin{proof}
Clearly, the dynamics \eqref{wealth} is a linear homogenous system of $(X,\pi)$ and objective in \eqref{prob1} is a quadratic homogenous system of $(X,\pi)$, so the first conclusion follows. The second claim trivially follows by taking $\lambda=1/|x|$ for $x\neq 0$ in \eqref{homo}.
\end{proof}

According to Lemmas \ref{viscosity} and \ref{lemmahomo}, the unique viscosity solution to \eqref{HJB2} must take the form of \eqref{valuefun}.
Although we know $\varphi(t,0)\equiv 0$, but due to the jump terms $\varphi(t,x+\pi^{\top}\beta_j)-\varphi(t,x)$ appeared in the integro-differential operator $\ol_{\pi}\varphi$, we cannot solve the problem \eqref{HJB2} in the regions $x>0$ and $x<0$, separately. In other words, the solutions to \eqref{HJB2} in the regions $x>0$ and $x<0$ (that is, $P_{+,t}$ and $P_{-,t}$) are coupled together, we must solve them simultaneously. This is the essential difference between a continuous diffusion model and the diffusion-jump model. In the former case one can solve the problem in the two regions separately (see, e..g, \cite{HSX}, \cite{HZ}, \cite{LZL}).

Because the problem \eqref{prob1} is Markovian, both $P_{+,t}$ and $P_{-,t}$ must be deterministic function of $t$.
We now use the fact that $\varphi$ is the viscosity solution to \eqref{HJB2} to
derive the dynamics of $(P_{+,t}, P_{-,t})$ by intuitive argument.

We suppose that $(P_{+,t}, P_{-,t})$ is governed by a two-dimensional coupled ODE:
\begin{align}\label{Riccati}
\begin{cases}
\dot{P}_{+,t}=-H_{+}^*(t,P_{+,t},P_{-,t}),\\
\dot{P}_{-,t}=-H_{-}^*(t,P_{+,t},P_{-,t}),\\
P_{\pm,T}=1, \ P_{\pm,t}>0,
\end{cases}
\end{align}
for some functions $H_{+}^*$ and $H_{-}^*$ on $[0,T]\times\R_{+}\times\R_{+}$ to be determined. Note here we suspect $P_{\pm,t}=\varphi(t,\pm1)>0 $ because the market is free of arbitrage opportunity.

Under the above conjecture, we have $\varphi\in C^{1,2}([0,T)\times\R/\{0\})$, therefore, it satisfies \eqref{HJB2} in the classical sense in that region.
In particular, for $(t,x)\in [0,T)\times (0,\infty)$, we have $\varphi(t,x)=P_{+,t}x^{2}$ and
\[
\varphi(t,x+\pi^{\top}\beta_{j,t}(e))=P_{+,t}[(x+\pi^{\top}\beta_{j,t}(e))^+]^2
+P_{-,t}[(x+\pi^{\top}\beta_{j,t}(e))^-]^2.
\]
Remind that many existing works used a wrong form of the second equation\footnote{such as $\varphi(t,x+\pi^{\top}\beta_{j,t}(e))=P_{+,t}(x+\pi^{\top}\beta_{j,t}(e))^2$.} that eventually led to wrong solutions.
Taking the above expressions
into \eqref{HJB2} yields
\begin{align*}
&\quad\; \varphi_t(t,x)+\inf\limits_{\pi\in\R^m_+}\ol_{\pi}\varphi(t,x)\nn\\
&=\dot{P}_{+,t}x^2+\inf\limits_{\pi\in\R^m_+}\Big\{P_{+,t}|\pi^{\top}\sigma_t|^2+2P_{+,t} x\pi^{\top}\mu_t+\sum_{j=1}^{\ell}\int_{\cE}\Big[P_{+,t}[(x+\pi^{\top}\beta_{j,t}(e))^+]^2\nn\\
&\qquad\qquad\qquad\qquad~
+P_{-,t}[(x+\pi^{\top}\beta_{j,t}(e))^-]^2-P_{+,t}x^2-2P_{+,t}x\pi^{\top}\beta_{j,t}(e)\Big]\nu_j(\de)\Big\}\nn\\
&=x^2\Bigg\{\dot{P}_{+,t}+\inf\limits_{\pi\in\R^m_+}\Big\{P_{+,t}\big|(\pi/x)^{\top}\sigma_t\big|^2+2P_{+,t}(\pi/x)^{\top}\mu_t\nn\\
&\qquad\qquad+\sum_{j=1}^{\ell}\int_{\cE}\Big[P_{+,t}\Big[\big[\big(1+(\pi/x)^{\top}\beta_{j,t}(e)\big)^+\big]^2-2(\pi/x)^{\top}\beta_{j,t}(e)-1\Big]\nn\\
&\qquad\qquad\qquad\qquad~+P_{-,t}\big[\big(1+(\pi/x)^{\top}\beta_{j,t}(e)\big)^-\big]^2\Big]\nu_j(\de)\Big\}\Bigg\}\nn\\
&=x^2\Bigg\{-H_{+}^*(t,P_{+,t},P_{-,t})+\inf\limits_{v\in\R^m_+}\Big\{P_{+,t}|v^{\top}\sigma|^2+2P_{+,t} v^{\top}\mu_t\nn\\
&\qquad\qquad+\sum_{j=1}^{\ell}\int_{\cE}\Big[P_{+,t}\Big[\big[(1+v^{\top}\beta_{j,t}(e))^+\big]^2-1-2v^{\top}\beta_{j,t}(e)\Big]\nn\\
&\qquad\qquad\qquad\qquad
+P_{-,t}\big[(1+v^{\top}\beta_{j,t}(e))^-\big]^2\Big]\nu_j(\de)\Big\}\Bigg\}\nn\\
&=0,
\end{align*}
where we used the fact that $\pi\in\R^m_+$ if and only if $\pi/x\in\R^m_+$ since $x>0$.
Hence we shall have for $(t,P_{+},P_{-})\in [0,T]\times \R_{+}\times \R_{+}$,
\begin{align} \label{defH1}
H_{+}^*(t,P_{+},P_{-})&=
\inf\limits_{v\in\R^m_+}H_{+}(t,v,P_{+},P_{-}),
\end{align}
where
\begin{align}\label{def:Hv1}
H_{+}(t,v,P_{+},P_{-}) &:=P_{+}|v^{\top}\sigma|^2+2P_{+} v^{\top}\mu_t\nn\\
&\quad\;+\sum_{j=1}^{\ell}\int_{\cE}\Big[P_{+}[[(1+v^{\top}\beta_{j,t}(e))^+]^2-1-2v^{\top}\beta_{j,t}(e)]\nn\\
&\qquad\qquad\qquad
+P_{-}[(1+v^{\top}\beta_{j,t}(e))^-]^2\Big]\nu_j(\de).
\end{align}
Similarly, for $(t,x)\in [0,T)\times (-\infty,0)$, we can obtain
\begin{align} \label{defH2}
H_{-}^*(t, P_{+},P_{-})&=
\inf\limits_{v\in\R^m_+}H_{+}(t,v,P_{+},P_{-}),
\end{align}
where
\begin{align}\label{def:Hv2}
H_{-}(t,v,P_{+},P_{-}) &:=P_{-}|v^{\top}\sigma|^2-2P_{-} v^{\top}\mu_t\nn\\
&\quad\;+\sum_{j=1}^{\ell}\int_{\cE}\Big[P_{-}[[(1-v^{\top}\beta_{j,t}(e))^+]^2+2v^{\top}\beta_{j,t}(e)-1]^2\nn\\
&\qquad\qquad\quad+P_{+}[(1-v^{\top}\beta_{j,t}(e))^-]^2\Big]\nu_j(\de).
\end{align}
\begin{remark}
Trivially,
\begin{align*}
H^{*}_{\pm}(t,P_{+},P_{-}) \leq H_{\pm}(t, 0,P_{+},P_{-})=0.
\end{align*}
This will be used frequently below without claim.
\end{remark}

Now we have conjectured the expressions of $H_{+}^*$ and $H_{-}^*$. The following result resolves the solvability issue of the corresponding ODE \eqref{Riccati} for $(P_{+,t}, P_{-,t})$.
\begin{thm}\label{existence}
Let $H_{+}^*$ and $H_{-}^*$ be defined by \eqref{defH1} and \eqref{defH2}. Then the corresponding ODE \eqref{Riccati} admits a unique classical positive solution $(P_{+},P_{-})$. Furthermore,
\begin{align}\label{hatv}
\hat v_{\pm}(t,P_{+,t},P_{-,t})&:=\argmin_{v\in\R^m_+}H_{\pm}(t,v,P_{+,t},P_{-,t})
\end{align}
are bounded functions on $[0,T]$.
\end{thm}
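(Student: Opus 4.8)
The plan is to prove Theorem \ref{existence} by a combination of \emph{a priori} estimates and a fixed-point/continuation argument. First I would study the structure of the Hamiltonians $H_{\pm}$. For fixed $(t,P_+,P_-)\in[0,T]\times\R_+\times\R_+$, the map $v\mapsto H_{\pm}(t,v,P_+,P_-)$ is continuous, convex (the quadratic term $P_{\pm}|v^{\top}\sigma_t|^2$ plus the convex functions $[(1\pm v^{\top}\beta_{j,t})^{\pm}]^2$), and coercive on $\R^m_+$ thanks to Assumption \ref{assu1}: the nondegeneracy $\Sigma_t\geq\delta\mathbf 1_m$ forces $H_{\pm}(t,v,P_+,P_-)\to+\infty$ as $|v|\to\infty$ at a rate controlled by $\min\{P_+,P_-\}\,\delta|v|^2$ up to a linear term. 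Hence the infimum in \eqref{defH1}--\eqref{defH2} is attained at a unique minimizer $\hat v_{\pm}$, giving well-definedness of $H^*_{\pm}$ and of $\hat v_{\pm}$. Standard convex-analysis / maximum-theorem arguments then show $(t,P_+,P_-)\mapsto H^*_{\pm}$ and $(t,P_+,P_-)\mapsto\hat v_{\pm}$ are continuous, and locally Lipschitz in $(P_+,P_-)$ on the open region where $P_\pm>0$ (the Lipschitz bound degenerating as $P_\pm\downarrow0$); this is enough for local existence and uniqueness of the ODE \eqref{Riccati} near the terminal time $T$ where $P_{\pm,T}=1$.

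Next I would establish the two-sided \emph{a priori} bounds that are the crux of the argument: there exist constants $0<c\leq C<\infty$, depending only on $T$ and the bounds in Assumption \ref{assu1}, such that any solution satisfies $c\leq P_{\pm,t}\leq C$ on its interval of existence. The upper bound is easy: since $H^*_{\pm}\leq H_{\pm}(t,0,\cdot)=0$ by the Remark, we get $\dot P_{\pm,t}=-H^*_{\pm}\geq0$, so $P_{\pm}$ is nondecreasing backward in time, i.e.\ $P_{\pm,t}\leq P_{\pm,T}=1$ — in fact $P_{\pm,t}\geq1$ as well, which already gives the \emph{lower} bound $P_{\pm,t}\geq1$! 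Wait — one must be careful about sign/direction: \eqref{Riccati} is a terminal-value problem, and $\dot P_{\pm,t}=-H^*_{\pm}\geq0$ means $P_{\pm,\cdot}$ is nondecreasing in $t$, hence $P_{\pm,t}\leq P_{\pm,T}=1$ for $t\le T$, so the relevant bound to prove is the \emph{positive lower bound} staying away from $0$. For that I would derive a differential inequality: using the coercivity estimate on $H_{\pm}$ one shows $|H^*_{\pm}(t,P_+,P_-)|\leq K(1+P_++P_-)$ for a constant $K$ (bounding the optimal $v$ via the coercivity rate and feeding it back into \eqref{def:Hv1}--\eqref{def:Hv2}), whence by Grönwall $P_{+,t}+P_{-,t}\geq e^{-K(T-t)}(P_{+,T}+P_{-,T})>0$. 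Combining, $P_{\pm,t}\in[e^{-KT}\cdot\tfrac12,1]$ say; in particular the solution never leaves the region where the right-hand side is locally Lipschitz, and never hits the boundary $P_\pm=0$.

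With the \emph{a priori} bounds in hand, the continuation argument closes the proof: local existence/uniqueness gives a maximal solution on some $(\tau,T]$; if $\tau>0$ the bounds $c\leq P_{\pm,t}\leq C$ and the continuity of $H^*_{\pm}$ would let one extend past $\tau$, contradicting maximality, so $\tau=0$ and we have a unique classical positive solution on all of $[0,T]$. Finally, boundedness of $\hat v_{\pm}(t,P_{+,t},P_{-,t})$ follows directly: plugging the bounds $c\leq P_{\pm,t}\leq C$ into the coercivity estimate for $H_{\pm}$ shows the minimizer satisfies $|\hat v_{\pm}|^2\leq \dfrac{C'}{c\,\delta}$ uniformly in $t$. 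I expect the main obstacle to be the two technical points underlying everything: (i) making the coercivity constant in $v$ depend only on $\delta$ and the \emph{lower} bound of $P_\pm$ — one needs the nondegeneracy $\Sigma_t\geq\delta\mathbf 1_m$ to survive the passage through the $(\cdot)^{+}$ and $(\cdot)^{-}$ truncations, which requires a careful case split on the sign of $1\pm v^{\top}\beta_{j,t}$ and using that $\sigma_t\sigma_t^{\top}$ and the $\beta$-integral terms together dominate $\delta\mathbf 1_m$; and (ii) verifying the local Lipschitz dependence of $H^*_{\pm}$ and $\hat v_\pm$ on $(P_+,P_-)$ — here I would use that the minimizer is characterized by the variational inequality (first-order KKT conditions for the constraint $v\in\R^m_+$) and a strong-convexity estimate (the Hessian is $\geq 2c\delta\mathbf 1_m$) to get $|\hat v_{\pm}(t,\cdot)-\hat v_{\pm}(t,\cdot')|\leq L|(P_+,P_-)-(P'_+,P'_-)|$, which then transfers to $H^*_\pm$ by the envelope theorem.
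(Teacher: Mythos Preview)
Your outline is sound up to and including the upper bound $P_{\pm,t}\le 1$, local Lipschitz continuity of $H^*_\pm$ on $\{P_+>0,\,P_->0\}$, and the continuation principle. The genuine gap is in your \emph{lower} bound argument, and it is precisely the circularity you yourself flag as obstacle~(i) but do not resolve.

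You claim that the coercivity estimate yields $|H^*_{\pm}(t,P_+,P_-)|\le K(1+P_++P_-)$ with a constant $K$ independent of $(P_+,P_-)$. But the quadratic-in-$v$ lower bound on $H_\pm$ is governed by $\min\{P_+,P_-\}\,\delta$, not by $\delta$ alone: the $(\cdot)^+$ and $(\cdot)^-$ truncations split the $\Sigma_t$-coercivity between the $P_+$-weighted and $P_-$-weighted pieces, and if one of $P_+,P_-$ is small the corresponding piece gives essentially no help. Concretely, completing the square in the bound $H_\pm\ge c_1|v|^2-c_2(|v|+1)$ produces $|H^*_\pm|\le c_2^2/(4c_1)+c_2$ with $c_1\sim\min\{P_+,P_-\}\delta$, so the best you can extract from coercivity alone is $|H^*_\pm|\lesssim 1/\min\{P_+,P_-\}$. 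This degenerates as either component approaches zero, which is exactly what you are trying to preclude. Moreover, even if the linear bound held, your Gr\"onwall step only controls the \emph{sum} $P_{+,t}+P_{-,t}$ from below; the jump to ``$P_{\pm,t}\in[e^{-KT}\cdot\tfrac12,1]$'' is unjustified, since a lower bound on the sum does not prevent one component from vanishing while the other stays near $1$.

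The paper circumvents both issues by a comparison argument rather than a direct Gr\"onwall on $P_\pm$. It introduces the explicit barrier $\underline P_t=\exp\bigl(-\int_t^T\mu_s^\top\Sigma_s^{-1}\mu_s\,ds\bigr)$ and observes that when $P_+=P_-=\underline P$ the truncations disappear, giving the exact identity $\inf_{v\in\R^m}H_{\pm}(t,v,\underline P,\underline P)=-\underline P\,\mu_t^\top\Sigma_t^{-1}\mu_t=\dot{\underline P}_t$; since the infimum over $\R^m_+$ is no smaller, $\underline P$ is a subsolution for each equation. The crucial structural fact is that $H_+(t,v,P_+,P_-)$ is \emph{nondecreasing in $P_-$} (and symmetrically $H_-$ in $P_+$), because $P_-$ multiplies the nonnegative term $[(1+v^\top\beta_j)^-]^2$. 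This monotonicity lets one run a Gr\"onwall argument on $[(\underline P_t-\tilde P_{+,t})^+]^2+[(\underline P_t-\tilde P_{-,t})^+]^2$ \emph{after} first truncating the driver via $g(P)=\alpha\vee(P\wedge1)$ to secure global existence; the comparison then shows $\tilde P_{\pm,t}\ge\underline P_t\ge\alpha$, so the truncation is inactive and one has a genuine solution of \eqref{Riccati}. The lower bound thus comes from an explicit subsolution and a one-sided monotonicity of the coupling, not from a size estimate on $H^*_\pm$.
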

The proof is slightly technical and we defer it to Section \ref{sec:Solvability}.

\begin{prop}\label{homeprop}
Let $(P_{+},P_{-})$ be given in Theorem \ref{existence}.
Then the value function of the problem \eqref{prob1} is given by
\begin{align}
\varphi(t,x)=P_{+,t}(x^+)^2+P_{-,t}(x^-)^2,~~(t,x)\in[0,T]\times\R.
\end{align}
\end{prop}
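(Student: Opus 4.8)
The plan is to show that the function
\[
\widehat\varphi(t,x):=P_{+,t}(x^+)^2+P_{-,t}(x^-)^2
\]
built from the classical solution $(P_{+},P_{-})$ of \eqref{Riccati} coincides with the value function $\varphi$ of \eqref{prob1}. By Lemma \ref{viscosity}, $\varphi$ is the \emph{unique} quadratic-growth viscosity solution of the HJB equation \eqref{HJB2}, so it suffices to verify that $\widehat\varphi$ (i) has quadratic growth, (ii) satisfies the terminal condition, and (iii) is a viscosity solution of \eqref{HJB2}. Item (i) is immediate from boundedness of $P_{\pm}$ on $[0,T]$ (Theorem \ref{existence}), and (ii) holds because $P_{\pm,T}=1$ gives $\widehat\varphi(T,x)=(x^+)^2+(x^-)^2=x^2$.

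For (iii), first note that on the open regions $\{x>0\}$ and $\{x<0\}$ the function $\widehat\varphi$ is $C^{1,2}$, and the heuristic computation in Section \ref{heuristic} is in fact rigorous there: plugging $\widehat\varphi$ into the PIDE and using the definitions \eqref{defH1}, \eqref{defH2} of $H^*_{\pm}$ together with the ODE \eqref{Riccati} shows that $\widehat\varphi_t+\inf_{\pi\in\R^m_+}\ol_\pi\widehat\varphi=0$ holds classically on $[0,T)\times(\R\setminus\{0\})$. One must be a little careful that the nonlocal term $\ol_\pi$ is well-defined pointwise — this follows because $\widehat\varphi$ has quadratic growth and $\beta$ is bounded with $\sum_j\nu_j(\cE)<\infty$, so the integrand is dominated by an integrable function uniformly for $v$ in bounded sets; combined with the coercivity in $v$ coming from $\Sigma_t\geq\delta\mathbf 1_m$ (Assumption \ref{assu1}), the infimum is attained, which is exactly the content of \eqref{hatv}. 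The only remaining point is the single line $x=0$: there $\widehat\varphi(t,0)=0$, $\widehat\varphi$ is $C^1$ across $0$ with $\widehat\varphi_x(t,0)=0$, but it need not be twice differentiable. So I would check the viscosity sub- and supersolution inequalities at $x=0$ by hand. For the supersolution inequality one tests with a smooth $\phi$ touching $\widehat\varphi$ from below at $(t,0)$; since $\widehat\varphi\geq 0=\widehat\varphi(t,0)$ and $\phi\leq\widehat\varphi$, one gets $\phi_x(t,0)=0$ and $\phi_{xx}(t,0)\leq 2\min\{P_{+,t},P_{-,t}\}$ (comparing one-sided second differences), and then $\phi_t(t,0)+\inf_\pi\ol_\pi\phi(t,0)\geq 0$ follows because taking $\pi=0$ kills the drift and diffusion terms while the nonlocal term $\int[\widehat\varphi(t,\pi^\top\beta)-\widehat\varphi(t,0)-\cdots]$ evaluated with $\pi=0$ vanishes — more care is needed since the infimum over $\pi$ must be handled, but the $\pi=0$ choice gives the needed one-sided bound after noting $\phi\le\widehat\varphi$ globally. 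The subsolution inequality at $x=0$ is the delicate one and is handled symmetrically, using that any smooth $\phi$ touching from above satisfies $\phi_x(t,0)=0$, $\phi_{xx}(t,0)\ge 2\max\{P_{+,t},P_{-,t}\}$, and that $\ol_\pi\phi(t,0)\le\ol_\pi\widehat\varphi(t,0)$ is controlled via the quadratic form; one then invokes the classical PIDE identity on either side together with continuity in $x$ to pass to the limit $x\to0$.

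I expect the main obstacle to be precisely the verification of the viscosity inequalities along the interface $x=0$, where $\widehat\varphi$ fails to be $C^2$: one has to simultaneously manage the $\inf_{\pi\in\R^m_+}$ (noncompact, but tamed by the coercivity from $\Sigma_t\ge\delta\mathbf 1_m$) and the nonlocal operator, which mixes the values $P_{+,t}$ and $P_{-,t}$ and therefore cannot be localized to one side. A clean way to organize this is to use the equivalent definition of viscosity solution for PIDEs in which the nonlocal term is always evaluated on the true function $\widehat\varphi$ (only the local part is tested against $\phi$), as in \cite{BHL}; then the nonlocal term in the inequality at $x=0$ is literally $\sum_j\int_\cE[\widehat\varphi(t,\pi^\top\beta_{j,t}(e))-0-0]\nu_j(\de)\ge 0$, and the whole verification reduces to the local second-order test at $x=0$, which is the standard kink argument. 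Finally, once (i)–(iii) are established, uniqueness from Lemma \ref{viscosity} forces $\widehat\varphi=\varphi$, which is the assertion of the proposition.
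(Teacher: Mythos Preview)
Your plan is essentially the paper's proof: reduce to showing $\widehat\varphi$ is a viscosity solution of \eqref{HJB2} via the uniqueness in Lemma \ref{viscosity}, observe that the PIDE holds classically on $\{x\neq 0\}$ by the computation of Section \ref{heuristic}, and then check the viscosity inequalities at $(t,0)$ by the kink comparison $\phi_{xx}(t,0)\gtrless 2P_{\pm,t}$ together with the global bound $\phi\gtrless\widehat\varphi\geq 0$ to control the nonlocal term. Your only slip is that the two tests at $x=0$ are interchanged: for $\phi$ touching from \emph{above} one gets $\phi_{xx}(t,0)\geq 2\max\{P_{+,t},P_{-,t}\}$ and hence $\phi_t(t,0)+\ol_\pi\phi(t,0)\geq \max\{P_{+,t},P_{-,t}\}|\pi^\top\sigma_t|^2+\sum_j\int_\cE\widehat\varphi(t,\pi^\top\beta_{j,t})\,\nu_j(\de)\geq 0$ for \emph{every} $\pi$ (this is the direction the paper writes out in full), whereas for $\phi$ touching from \emph{below} the single choice $\pi=0$ already gives $\phi_t(t,0)+\inf_\pi\ol_\pi\phi(t,0)\leq \phi_t(t,0)+\ol_0\phi(t,0)=0$.
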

\begin{proof}
By Lemma \ref{viscosity}, it suffices to prove that
\begin{align}
\bar\varphi(t,x)\equiv P_{+,t}(x^+)^2+P_{-,t}(x^-)^2
\end{align}
is the unique quadratic growth (w.r.t. the spacial argument) viscosity solution to the HJB equation \eqref{HJB2}.

First, since $P_{\pm,T}=1$, the boundary condition $\bar\varphi(T,x)=x^{2}$ is satisfied.
Second, the aforementioned argument shows that, in the classical sense,
\begin{align}
\bar\varphi_t(t,x)+\inf\limits_{\pi\in\R^m_+}\ol_{\pi}\bar\varphi(t,x)=0, ~~(t,x)\in [0,T)\times\R/\{0\}.
\end{align}
Hence, it is only left to show $\bar\varphi$ is a viscosity solution to the HJB equation \eqref{HJB2} at any point $(t, 0)$ with $t\in[0,T)$.

Indeed, for any test function $\phi\in C^{1,2}([0,T]\times\R)$ such that the function $\phi-\bar\varphi$ arrives its minimum value 0 at the point $(t, 0)$, we have
\begin{align}
\phi(t,0)=\bar\varphi(t,0)=0,~~ \phi_{t}(t,0)\geq \bar\varphi_{t}(t,0)=0,~~
\phi_{x}(t,0)=\bar\varphi_{x}(t,0)=0.
\end{align}
If $\phi_{xx}(t,0)<2P_{+,t}$, then $\phi_{xx}(t,\theta)<2P_{+,t}$ whenever $\theta>0$ is sufficiently small. So, whenever $x>0$ is sufficiently small, by Talyor's expansion,
\begin{align*}
\phi(t,x)=\phi(t,0)+\phi_{x}(t,0)x+\frac{1}{2} \phi_{xx}(t,\theta)x^{2}=\frac{1}{2} \phi_{xx}(t,\theta)x^{2}
<P_{+,t} x^{2}=\bar\varphi(t,x),
\end{align*}
contradicting $\phi-\bar\varphi\geq 0$. Hence we proved $\phi_{xx}(t,0)\geq 2P_{+,t}$. Similarly, we can prove $\phi_{xx}(t,0)\geq 2P_{-,t}$, so $\phi_{xx}(t,0)\geq 2\max\{P_{+,t}, P_{-,t}\}$.
It then follows, for any $\pi\in\R^m_+$,
\begin{align*}
\phi_t(t,0)+\ol_{\pi}\phi(t,0)
&\geq\frac{1}{2}\phi_{xx}(t,0)|\pi^{\top}\sigma_t|^2+\phi_x(t,0)\pi^{\top}\mu_t\\
&\qquad+\sum_{j=1}^{\ell}\int_{\cE}[\phi(t, \pi^{\top}\beta_{j,t}(e))
-\phi(t,0)-\phi_x(t,0)\pi^{\top}\beta_{j,t}(e)]\nu_j(\de)\\
&\geq \max\{P_{+,t}, P_{-,t}\}|\pi^{\top}\sigma_t|^2+\sum_{j=1}^{\ell}\int_{\cE}\bar\varphi(t, \pi^{\top}\beta_{j,t}(e))\nu_j(\de)\\
&\geq 0,
\end{align*}
where we used fact that $\phi\geq \bar\varphi\geq 0$ to derive the last two inequalities.
This shows $\bar\varphi$ is a viscosity supsolution to \eqref{HJB2} at the point $(t, 0)$. Similarly one can show $\bar\varphi$ is also a viscosity subsolution to \eqref{HJB2} at the point $(t, 0)$, completing the proof.
\end{proof}

\section{Solutions to \eqref{optmun} and \eqref{optm}.}\label{sec:solution}
We now provide complete answers to the problems \eqref{optmun} and \eqref{optm} by verification arguments.

\subsection{Solution to \eqref{optmun}.}

\begin{prop}\label{verifi}
Let $(P_{+},P_{-})$, $\hat v_{\pm}(t,P_{+,t},P_{-,t})$ be given in Theorem \ref{existence}.
Then the optimal value of the LQ problem \eqref{optmun} is
\begin{align*}
V(0,x;d)=P_{+,0}\big[(x-d)^+\big]^2+P_{-,0}\big[(x-d)^-\big]^2.
\end{align*}
Moreover, the state feedback control defined by
\begin{align}\label{pistar}
\pi^*(t,X)=\hat v_{+}(t,P_{+,t},P_{-,t})(X_{t-}-d)^++\hat v_{-}(t,P_{+,t},P_{-,t})(X_{t-}-d)^-,
\end{align}
is optimal for the LQ problem \eqref{optmun}.
\end{prop}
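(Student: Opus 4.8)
The plan is to prove this by a classical verification argument resting on the generalized (Meyer--It\^{o}) change-of-variables formula and on the solution $(P_{+},P_{-})$ of \eqref{Riccati} furnished by \citethm{existence}. The value formula is in fact almost immediate: by \eqref{valuechange} together with \citeprop{homeprop},
\[
V(0,x;d)=\varphi(0,x-d)=P_{+,0}\big[(x-d)^{+}\big]^{2}+P_{-,0}\big[(x-d)^{-}\big]^{2},
\]
so the real content of the statement is that the feedback \eqref{pistar} is admissible and attains this value; the verification below will in passing reprove the formula in a self-contained way. Throughout I write $\varphi(t,x)=P_{+,t}(x^{+})^{2}+P_{-,t}(x^{-})^{2}$ and, for a wealth process $X$, set $Y_{t}:=X_{t}-d$. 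Note that $\varphi_{x}(t,x)=2P_{+,t}x^{+}-2P_{-,t}x^{-}$ is Lipschitz in $x$ uniformly in $t$, whereas $\varphi_{xx}(t,\cdot)=2P_{+,t}\idd{(0,\infty)}+2P_{-,t}\idd{(-\infty,0)}$ is bounded but jumps at $x=0$; thus $\varphi(t,\cdot)$ is $C^{1}$ but not $C^{2}$ at the vertex $x=0$, equivalently at the wealth level $X_{t}=d$.

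\emph{A verification inequality.} For an arbitrary $\pi\in\ou$ with wealth $X=X^{\pi}$ I would apply the Meyer--It\^{o} formula to $\varphi(t,Y_{t})$. Since $\varphi_{t}$ is continuous and $\varphi_{x}(t,\cdot)$ is absolutely continuous, the second-derivative measure of $\varphi(t,\cdot)$ has no singular part, so no local-time term appears; and since $\{t:\,Y_{t-}=0\}$ is $\dd\langle Y^{c}\rangle$-null by the occupation-times formula (the continuous bracket of $Y$ being $|\pi_{t}^{\top}\sigma_{t}|^{2}\dt$), the undefined value $\varphi_{xx}(\cdot,0)$ plays no role. This yields, with $M$ a local martingale,
\[
\varphi(T,Y_{T})=\varphi(0,Y_{0})+\int_{0}^{T}\big[\varphi_{t}(s,Y_{s-})+\ol_{\pi_{s}}\varphi(s,Y_{s-})\big]\ds+M_{T}.
\]
By the heuristic computation of \citesec{heuristic} (rendered rigorous in \citeprop{homeprop}), $\varphi$ solves \eqref{HJB2} classically on $[0,T)\times(\R\setminus\{0\})$, whence $\varphi_{t}+\ol_{\pi}\varphi\geq 0$ there for every $\pi\in\R^{m}_{+}$; at $Y_{s-}=0$ the $\varphi_{xx}$-term carries no $\dd\langle Y^{c}\rangle$-mass and the remainder equals $\varphi_{t}(s,0)+\sum_{j=1}^{\ell}\int_{\cE}\varphi(s,\pi_{s}^{\top}\beta_{j,s}(e))\,\nu_{j}(\de)\geq 0$, since $\varphi_{t}(s,0)=0$ and $\varphi\geq 0$. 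Hence the $\ds$-integrand is nonnegative. Localizing $M$ by stopping times $\tau_{k}\uparrow T$, taking expectations, and letting $k\to\infty$ with the help of the standard bound $\E\sup_{0\leq t\leq T}|X_{t}|^{2}<\infty$ valid for $\pi\in\ou$ (the coefficients of \eqref{wealth} being bounded), dominated convergence, and $Y_{T-}=Y_{T}$ a.s., I obtain $\E[(X_{T}-d)^{2}]=\E[\varphi(T,Y_{T})]\geq\varphi(0,x-d)$, i.e. $V(0,x;d)\geq P_{+,0}[(x-d)^{+}]^{2}+P_{-,0}[(x-d)^{-}]^{2}$.

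\emph{Admissibility and optimality of $\pi^{*}$.} Substituting \eqref{pistar} into \eqref{wealth} produces a closed-loop SDE whose drift, diffusion and jump coefficients are $x\mapsto\hat v_{+}(t,P_{+,t},P_{-,t})(x-d)^{+}+\hat v_{-}(t,P_{+,t},P_{-,t})(x-d)^{-}$ multiplied by the bounded data $\mu_{t},\sigma_{t},\beta_{t}(e)$; since $\hat v_{\pm}$ are bounded on $[0,T]$ by \citethm{existence}, this map is globally Lipschitz in $x$ (uniformly in $t$) and of linear growth, so the SDE has a unique strong solution $X^{*}$ with $\E\sup_{0\leq t\leq T}|X_{t}^{*}|^{2}<\infty$; hence $\E\int_{0}^{T}|\pi_{t}^{*}|^{2}\dt<\infty$, and as $\hat v_{\pm}\in\R^{m}_{+}$ we get $\pi^{*}\geq 0$, so $\pi^{*}\in\ou$. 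Repeating the computation above along $Y_{t}^{*}:=X_{t}^{*}-d$, the $\ds$-integrand now vanishes identically: when $Y_{s-}^{*}>0$, the substitution $\pi=Y_{s-}^{*}v$ from \citesec{heuristic} gives $\inf_{\pi\in\R^{m}_{+}}\ol_{\pi}\varphi(s,Y_{s-}^{*})=(Y_{s-}^{*})^{2}H_{+}^{*}(s,P_{+,s},P_{-,s})$, attained precisely at $\pi=Y_{s-}^{*}\hat v_{+}(s,P_{+,s},P_{-,s})=\pi^{*}(s,X^{*})$, which together with $\dot P_{+,s}=-H_{+}^{*}(s,P_{+,s},P_{-,s})$ yields $\varphi_{t}(s,Y_{s-}^{*})+\ol_{\pi_{s}^{*}}\varphi(s,Y_{s-}^{*})=0$; the case $Y_{s-}^{*}<0$ is symmetric (with $H_{-}^{*}$), and $Y_{s-}^{*}=0$ is trivial since there $\pi_{s}^{*}=0$ and $\varphi_{t}(s,0)=0$. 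Consequently $\E[(X_{T}^{*}-d)^{2}]=\E[\varphi(T,Y_{T}^{*})]=\varphi(0,x-d)$, which combined with the inequality of the previous step shows $V(0,x;d)=P_{+,0}[(x-d)^{+}]^{2}+P_{-,0}[(x-d)^{-}]^{2}$ and that $\pi^{*}$ is optimal.

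\emph{Main obstacle.} The delicate step will be the rigorous Meyer--It\^{o} expansion of $\varphi(t,Y_{t})$: the function $\varphi$ is only $C^{1}$, not $C^{2}$, across the vertex $x=0$, i.e. across the level $X_{t}=d$, and --- unlike in the purely diffusive models of \cite{LZL,HZ} --- the jumps of $X$ genuinely move $Y$ from one side of the vertex to the other, so its sign cannot be frozen. One must check carefully that absolute continuity of $\varphi_{x}(t,\cdot)$ eliminates the local-time term, that $\{t:\,Y_{t-}=0\}$ carries no $\dd\langle Y^{c}\rangle$-mass, and that the HJB relation extends suitably through $x=0$; a clean way to make all of this precise is to approximate $\varphi$ by smooth functions converging in $C^{1}$ with uniformly bounded second derivatives, apply the ordinary It\^{o} formula, and pass to the limit. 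A secondary, routine point is the localization needed to upgrade the $\dw$- and $\widetilde N$-stochastic integrals from local to true martingales under only the $L^{2}$-integrability built into $\ou$, dispatched via $\E\sup_{t}|X_{t}|^{2}<\infty$ and dominated convergence.
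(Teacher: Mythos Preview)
Your proposal is correct and follows the same high-level verification strategy as the paper, but the technical execution of the Meyer--It\^{o} step differs in a way worth noting. The paper works componentwise: it applies the Meyer--It\^{o} formula to $Y_{t}^{+}$ and $Y_{t}^{-}$ separately (where the local time $\mathbb{L}$ of $Y$ at $0$ does appear), then squares each via the ordinary It\^{o} formula (the local time disappears because $Y_{t}^{\pm}\,\dd\mathbb{L}_{t}=0$), and finally applies It\^{o} to $P_{+,t}(Y_{t}^{+})^{2}+P_{-,t}(Y_{t}^{-})^{2}$. You instead apply the generalized It\^{o} formula directly to $\varphi(t,Y_{t})$, exploiting that $\varphi(t,\cdot)$ is $C^{1,1}$ so its second-derivative measure has no singular part and the local-time integral collapses, via occupation times, to the usual $\frac{1}{2}\varphi_{xx}\,\dd\langle Y^{c}\rangle$ term. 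Your route is more streamlined; the paper's decomposition is more elementary in that it only invokes the textbook Tanaka formula for $x\mapsto x^{+}$. A second difference is that the paper verifies only the one inequality $\E[(X_{T}^{\pi^{*}}-d)^{2}]\leq\varphi(0,x-d)$ (via Fatou's lemma after localization), relying on \citeprop{homeprop} for the reverse, whereas you also prove the verification inequality $\E[(X_{T}^{\pi}-d)^{2}]\geq\varphi(0,x-d)$ for every $\pi\in\ou$ and pass to the limit by dominated convergence using $\E\sup_{t}|X_{t}|^{2}<\infty$; this makes your argument self-contained but is, given \citeprop{homeprop}, redundant.
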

\begin{proof}

The first assertion is a consequence of Proposition \ref{homeprop} and equation \eqref{valuechange}.
Substituting the feedback control \eqref{pistar} into the dynamics \eqref{wealth}, we get
\begin{align}
\begin{cases}
\dd Y_t=Y_{t-}^+\Big[\hat v_{+,t}^{\top}\mu_t\dt+\hat v_{+,t}^{\top}\sigma_t\dw_t+\int_{\cE} \hat v_{+,t}^{\top}\beta_t(e) \widetilde N(\dt,\de)\Big]\\
\qquad ~~+Y_{t-}^-\Big[\hat v_{-,t}^{\top}\mu_t\dt+\hat v_{-,t}^{\top}\sigma_t\dw_t+\int_{\cE} \hat v_{-,t}^{\top}\beta_t(e) \widetilde N(\dt,\de)\Big], \\
Y_0=y.
\end{cases}
\end{align}
where $Y_t:=X^{\pi^*}_t-d$, $y:=x-d$, and $\hat v_{\pm,t}:=\hat v_{\pm}(t,P_{+,t},P_{-,t})$.
Because the coefficients and $\hat v_{\pm,t}$ are bounded, the above SDE has a Lipschitz driver, hence admits a unique square integrable solution $Y$, which implies $\pi^*(t,X^{\pi^*}_{t})\in\ou$.

To verify the optimality of the feedback control \eqref{pistar},
it remains to prove
\begin{align*}\label{verify}
\E[(X^{\pi^*}_T-d)^2]&=P_{+,0}\big[(x-d)^+\big]^2+P_{-,0}\big[(x-d)^-\big]^2.
\end{align*}
By the Meyer-It\^o formula \cite[Theorem 70]{Protter}, we have
\begin{align*}
\dd Y_t^+&=Y_{t-}^+\Big[\big( \hat v_{+,t}^{\top}\mu_t-\sum_{j=1}^{\ell}\int_{\cE} \hat v_{+,t}^{\top}\beta_{j,t}\nu_j(\de)\big)\dt
+\hat v_{+,t}^{\top}\sigma_t\dw_t\Big]\\
&\qquad\qquad+\sum_{j=1}^{\ell}\int_{\cE}\Big[(Y_{t-}+Y_{t-}^+\hat v_{+,t}^{\top}\beta_{j,t}+Y_{t-}^- \hat v_{-,t}^{\top}\beta_{j,t})^+-Y_{t-}^+\Big]N_j(\dt,\de)+\frac{1}{2}\dd \mathbb{L}_t,
\end{align*}
and
\begin{align*}
\dd Y_t^-&=-Y_{t-}^-\Big[\big( \hat v_{-,t}^{\top}\mu_t-\sum_{j=1}^{\ell}\int_{\cE} \hat v_{-,t}^{\top}\beta_{j,t}\nu_j(\de)\big)\dt
+\hat v_{-,t}^{\top}\sigma_t\dw_t\Big]\\
&\qquad\qquad+\sum_{j=1}^{\ell}\int_{\cE}\Big[(Y_{t-}+Y_{t-}^+\hat v_{+,t}^{\top}\beta_{j,t}+Y_{t-}^- \hat v_{-,t}^{\top}\beta_{j,t})^--Y_{t-}^-\Big]N_j(\dt,\de)+\frac{1}{2}\dd \mathbb{L}_t,
\end{align*}
where $\mathbb{L}$ is the local time of $Y$ at $0$. Since $Y_t^{\pm}\dd \mathbb{L}_t=0$,
applying It\^{o} formula to $(Y_t^+)^2$ and $(Y_t^-)^2$ respectively yields
\begin{align*}
\dd\;(Y_t^+)^2&=(Y_{t-}^+)^2\Big[| \hat v_{+,t}^{\top}\sigma_t|^2+2r+2 \hat v_{+,t}^{\top}\mu_t-2\int_{\cE}v_{+,t}^{\top}\beta_t\nu(\de)\Big]\dt+2(Y_{t-}^+)^2 \hat v_{+,t}^{\top}\sigma_t\dw_t\\
&\qquad+\sum_{j=1}^{\ell}\int_{\cE}\Big[((Y_{t-}+Y_{t-}^+\hat v_{+,t}^{\top}\beta_{j,t}+Y_{t-}^- \hat v_{-,t}^{\top}\beta_{j,t})^+)^2-(Y_{t-}^+)^2
\Big]N(\dt,\de)\\
&=(Y_{t-}^+)^2\Big[| \hat v_{+,t}^{\top}\sigma_t|^2+2r+2 \hat v_{+,t}^{\top}\mu_t-2\int_{\cE}v_{+,t}^{\top}\beta_t\nu(\de)\Big]\dt+2(Y_{t-}^+)^2 \hat v_{+,t}^{\top}\sigma_t\dw_t\\
&\qquad+\sum_{j=1}^{\ell}\int_{\cE}\Big[(Y_{t-}^+)^2((1+\hat v_{+,t}^{\top}\beta_{j,e})^+)^2+(Y_{t-}^-)^2((-1+\hat v_{-,t}^{\top}\beta_{j,e})^+)^2-(Y_{t-}^+)^2
\Big]N(\dt,\de),
\end{align*}
and
\begin{align*}
\dd\;(Y_t^-)^2
&=(Y_{t-}^-)^2\Big[| \hat v_{-,t}^{\top}\sigma_t|^2+2r-2 \hat v_{-,t}^{\top}\mu_t+2\int_{\cE}v_{-,t}^{\top}\beta_t\nu(\de)\Big]\dt- 2(Y_{t-}^-)^2 \hat v_{-,t}^{\top}\sigma_t\dw_t\\
&\qquad+\sum_{j=1}^{\ell}\int_{\cE}\Big[(Y_{t-}^+)^2((1+\hat v_{+,t}^{\top}\beta_{j,e})^-)^2+(Y_{t-}^-)^2((-1+\hat v_{-,t}^{\top}\beta_{j,e})^-)^2-(Y_{t-}^-)^2
\Big]N(\dt,\de).
\end{align*}
Define a sequence of stopping times, for $n>0$,
\[
\tau_n=\inf\big\{t\geq 0\;\big|\;|Y_t|>n\big\}\wedge T.
\]
Recall the definitions of $H_{\pm}^*$, $H_{\pm}$ and $\hat v_{\pm}$,
and apply \ito to $P_{+,t}(Y_t^+)^2+P_{-,t}(Y_t^-)^2$ on $[0,\tau_n]$, we get
\begin{align*}
&\quad\;\E[P_{+,\tau_n}(Y_{\tau_n}^+)^2+P_{-,\tau_n}(Y_{\tau_n}^-)^2]\\
&=P_{+,0}(Y_{0}^+)^2+P_{-,0}(Y_{0}^-)^2\\
&\quad\;+\E\int_0^{\tau_n}\bigg\{ P_{+,t}(Y_{t-}^+)^2\Big[| \hat v_{+,t}^{\top}\sigma_t|^2+2r+2 \hat v_{+,t}^{\top}\mu_t-2\int_{\cE}v_{+,t}^{\top}\beta_t\nu(\de)\Big]\\
&\quad\;+P_{+,t}\sum_{j=1}^{\ell}\int_{\cE}\Big[(Y_{t-}^+)^2((1+\hat v_{+,t}^{\top}\beta_{j,e})^+)^2+(Y_{t-}^-)^2((-1+\hat v_{-,t}^{\top}\beta_{j,e})^+)^2-(Y_{t-}^+)^2
\Big]\nu_j(\de)\\
&\quad\;+P_{-,t}(Y_{t-}^-)^2\Big[| \hat v_{-,t}^{\top}\sigma_t|^2+2r-2 \hat v_{-,t}^{\top}\mu_t+2\int_{\cE}v_{-,t}^{\top}\beta_t\nu(\de)\Big]\\
&\quad\;+P_{-,t}\sum_{j=1}^{\ell}\int_{\cE}\Big[(Y_{t-}^+)^2((1+\hat v_{+,t}^{\top}\beta_{j,e})^-)^2+(Y_{t-}^-)^2((-1+\hat v_{-,t}^{\top}\beta_{j,e})^-)^2-(Y_{t-}^-)^2
\Big]\nu_j(\de)\\
&\quad\;-P_{+,t}(Y_{t-}^+)^2H_{+}^*(t,P_{+,t},P_{-,t})-P_{-,t}(Y_{t-}^-)^2H_{-}^*(t,P_{+,t},P_{-,t})\bigg\}\dt\\
&=P_{+,0}(Y_{0}^+)^2+P_{-,0}(Y_{0}^-)^2.
\end{align*}
Sending $n\rightarrow\infty$ and applying Fatou's lemma and $P_{\pm,T}=1$, we get
\begin{align*}
\E[Y_T^2]&\leq P_{+,0}(y^+)^2+P_{-,0}(y^-)^2,
\end{align*}
that is,
\begin{align*}
\E[(X^{\pi^*}_T-d)^2]&\leq P_{+,0}\big[(x-d)^+\big]^2+P_{-,0}\big[(x-d)^-\big]^2=V(0,x;d).
\end{align*}
The claim \eqref{verify} follows since the reverse inequality is trivial.
\end{proof}

\subsection{Efficient portfolio and efficient frontier to \eqref{optm}.}
\begin{lemma}\label{lemmap}
Let $(P_{+},P_{-})$ be given in Theorem \ref{existence}.
Then $P_{-,0}<1$.
\end{lemma}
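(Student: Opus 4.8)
The plan is to exploit the feasibility condition \eqref{feasible}, namely $\sum_{i=1}^m\int_0^T\mu_{i,t}\,\dt>0$, to exhibit an admissible control $v\in\R^m_+$ that makes $H_-^*$ strictly negative on a set of positive measure, and then integrate the ODE for $P_{-,t}$ backward from $T$. Recall that $P_{-,t}=\varphi(t,-1)$ solves $\dot P_{-,t}=-H_-^*(t,P_{+,t},P_{-,t})$ with $P_{-,T}=1$, and that $H_-^*(t,P_+,P_-)=\inf_{v\in\R^m_+}H_-(t,v,P_+,P_-)\le H_-(t,0,P_+,P_-)=0$. Hence $P_{-,t}$ is nondecreasing in the forward direction, i.e. nonincreasing as $t$ runs from $0$ to $T$, which already gives $P_{-,0}\ge P_{-,T}=1$ in the wrong direction — so the point is to show the inequality is \emph{strict} at $t=0$, i.e. that $H_-^*$ is \emph{strictly} negative somewhere.

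First I would examine $H_-(t,v,P_+,P_-)$ for small $v=\lambda w$ with $\lambda>0$ small and a fixed direction $w\in\R^m_+$. Expanding to first order in $\lambda$: the quadratic term $P_-|v^\top\sigma|^2$ is $O(\lambda^2)$; the jump integrand $P_-[((1-v^\top\beta_{j,t}(e))^+)^2+2v^\top\beta_{j,t}(e)-1]$ — note that for $\lambda$ small enough $(1-\lambda w^\top\beta_{j,t}(e))$ stays positive (since $\beta$ is bounded), so this term equals $P_-[(1-\lambda w^\top\beta_{j,t}(e))^2+2\lambda w^\top\beta_{j,t}(e)-1]=P_-\lambda^2 (w^\top\beta_{j,t}(e))^2=O(\lambda^2)$; and the $P_+$-part of the jump integrand vanishes in the regime where $1-v^\top\beta$ stays positive. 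Thus the only first-order term is $-2\lambda P_- w^\top\mu_t$, so $H_-(t,\lambda w,P_+,P_-)=-2\lambda P_- w^\top\mu_t+O(\lambda^2)$. Choosing $w$ so that $\int_0^T w^\top\mu_t\,\dt>0$ — for instance $w=(1,\dots,1)^\top$, which is admissible and for which $\int_0^T w^\top\mu_t\,\dt=\sum_i\int_0^T\mu_{i,t}\,\dt>0$ by \eqref{feasible} — shows $w^\top\mu_t>0$ on a subset $A\subseteq[0,T]$ of positive Lebesgue measure, and on $A$ we get $H_-^*(t,P_{+,t},P_{-,t})<0$ for $\lambda$ small (using $P_{-,t}>0$ from Theorem \ref{existence}).

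Then I would conclude by integrating: $1-P_{-,0}=P_{-,T}-P_{-,0}=\int_0^T\dot P_{-,t}\,\dt=-\int_0^T H_-^*(t,P_{+,t},P_{-,t})\,\dt\ge -\int_A H_-^*(t,P_{+,t},P_{-,t})\,\dt>0$, where the last step uses $H_-^*\le 0$ everywhere (so dropping $[0,T]\setminus A$ only decreases the integral) together with the strict negativity on $A$. This gives $P_{-,0}<1$. The main obstacle I anticipate is making the "$\lambda$ small enough'' choice uniform enough over $t$ and $e$: one must check that, because $\beta$ is bounded (Assumption \ref{assu1}) and $P_{\pm,t}$ are bounded (being continuous on $[0,T]$ by Theorem \ref{existence}), a single $\lambda_0>0$ works simultaneously for all $t\in A$ and all $e\in\cE$, so that $t\mapsto H_-(t,\lambda_0 w,P_{+,t},P_{-,t})$ is genuinely negative and bounded away from $0$ on a fixed positive-measure subset of $A$; once that uniformity is in hand the integration step is routine. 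An alternative, perhaps cleaner, route avoiding the expansion is probabilistic: use the feasibility of problem \eqref{optm} (Lemma \ref{viscosity}'s predecessor) to produce, for the LQ problem started at $X_0=-1$, an admissible $\pi$ with $\E[X_T^\pi]>-1$, whence by Jensen $\varphi(0,-1)\le\E[(X_T^\pi)^2]$ can be pushed below $1$ by scaling the bet — but the analytic argument above is more self-contained, so I would present that.
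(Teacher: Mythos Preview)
Your approach is essentially the paper's: pick a small $v\in\R^m_+$ (the paper uses $\varepsilon e_i$ for an index $i$ with $\mu_{i,t}>c\varepsilon$ on a set of positive measure, you use $\lambda(1,\dots,1)^\top$) so that $1-v^\top\beta_j>0$ by boundedness of $\beta$, whence the jump terms are $O(|v|^2)$ and the linear term $-2P_{-}v^\top\mu_t$ dominates, giving $H_-^*<0$ on a set of positive measure; then integrate the ODE. One slip in your write-up: $\dot P_{-,t}=-H_-^*\ge0$ means $P_{-,t}$ is nondecreasing in $t$, so the baseline inequality is already $P_{-,0}\le P_{-,T}=1$ (the \emph{right} direction, not the wrong one) --- but your concluding integration $1-P_{-,0}=-\int_0^T H_-^*\,\dt>0$ is correct regardless.
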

\begin{proof}
Since $\beta$ is bounded, we can choose a sufficiently small constant $\varepsilon>0$ such that
\begin{align*}
1-v^{\top}\beta_{j}> 0, \ \mbox{for all $j$, $v\in\R^m_+$ with $|v|\leq \varepsilon$}.
\end{align*}
Recalling \eqref{def:Hv2} and using the boundedness of coefficients, there exists a constant $c>0$ such that
\begin{align*}
H_{-}(t,v,P_{+,t},P_{-,t})
&\leq P_{-,t}v^{\top}\Sigma_tv-2 P_{-,t}\mu_t^{\top}v \leq P_{-,t}\big(c|v|^2-2\mu_t^{\top}v\big),
\end{align*}
for all $v\in\R^m_+$ with $|v|\leq \varepsilon$.

On the other hand, the condition \eqref{feasible} implies, for sufficiently small $\varepsilon>0$,
that there exist one $i\in M$ and $\mathcal{O}\subseteq [0,T]$ with positive Lebesgue measure such that $\mu_{i,t}>c\varepsilon$ on $\mathcal{O}$. Setting $\hat v_{i}=\varepsilon$ and $\hat v=(0,\cdots,0, \hat v_{i}, 0,\ldots,0)\in\R^m_+$, recalling $P_{-,t}>0$, we have
\[
H_{-}^*(t,P_{+,t},P_{-,t})\leq H_{-}(t,\hat v,P_{+,t},P_{-,t})\leq P_{-,t}\big(c|\hat v|^2-2\mu_t^{\top}\hat v\big)< 0,~~t\in \mathcal{O}.
\]
Since $H_{-}^*(t,P_{+,t},P_{-,t})\leq 0$ for any $t\in[0,T]$, we conclude from \eqref{Riccati} that $$P_{-,0}=P_{-,T}+\int_{0}^{T} H_{-}^*(t,P_{+,t},P_{-,t})\dt<P_{-,T}=1.$$
The proof is complete.
\end{proof}

According to Proposition \ref{verifi},
\begin{align*}
V(0,x;d)=P_{+,0}[(x-d)^+]^2+P_{-,0}[(x-d)^-]^2
\end{align*}
is a piecewise quadratic function. A simple calculation shows
\begin{align*}
\sup_{d\in\R}[V(0,x;d)-(d-z)^{2}]=V(0,x;d^*)-(d^*-z)^{2}=\frac{P_{-,0}}{1-P_{-,0}}(z-x)^2,
\end{align*}
where
\begin{align*}
d^*=\frac{z-xP_{-,0}}{1-P_{-,0}}.
\end{align*}
(Note that in the calculation we made use of the fact that
\begin{align*}
0<P_{-,0}<1 \ \mbox{and } \ x-d^*=\frac{x-z}{1-P_{-,0}}<0,
\end{align*}
due to Lemma \ref{lemmap} and $z>x$.)

According to the Lagrange duality relationship \eqref{duality},
the above analysis boils down to the following solution to the MV problem \eqref{optm}.
\begin{thm}\label{Th:efficient}
Let $(P_{+},P_{-})$, $\hat v_{\pm}(t,P_{+,t},P_{-,t})$ be given in Theorem \ref{existence}.
For $z\geq x$, set
\begin{align*}
d^*=\frac{z-xP_{-,0}}{1-P_{-,0}}.
\end{align*}
Then the state feedback control defined by
\begin{align}\label{efficient}
\pi^*(t,X)=\hat v_{+}(t,P_{+,t},P_{-,t}) (X_{t-}-d^*)^++\hat v_{-}(t,P_{+,t},P_{-,t})(X_{t-}-d^*)^-,
\end{align}
is optimal for the MV problem \eqref{optm}.
Moreover, the efficient frontier to the problem \eqref{optm} is a half-line, determined by
\begin{align*}
\mathrm{Var}(X^{\pi^{*}}_T)=\frac{P_{-,0}}{1-P_{-,0}}\Big(\E[X_T^{\pi^{*}}]-x\Big)^2,
\end{align*}
where $\E[X_T^{\pi^{*}}]=z\geq x$.
\end{thm}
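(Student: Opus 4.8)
The plan is to deduce Theorem \ref{Th:efficient} from the already-established Lagrange duality relation \eqref{duality} and from Proposition \ref{verifi}, which solves the auxiliary LQ problem \eqref{optmun} together with its optimal feedback control. Since all the analytic pieces are in place, the proof is essentially a matter of assembling them and checking that the maximizing $d^{*}$ is the one claimed, then verifying that the resulting feedback control indeed lies in $\mathcal U$ and attains the MV optimum.

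\textbf{Step 1: the inner maximization over $d$.} First I would record that, by Proposition \ref{verifi}, $V(0,x;d)=P_{+,0}[(x-d)^{+}]^{2}+P_{-,0}[(x-d)^{-}]^{2}$, so the function $d\mapsto V(0,x;d)-(d-z)^{2}$ is piecewise quadratic with a kink at $d=x$. On the branch $d\geq x$ (where $x-d\leq 0$) it equals $P_{-,0}(d-x)^{2}-(d-z)^{2}$; on the branch $d\le x$ it equals $P_{+,0}(d-x)^{2}-(d-z)^{2}$. Because $0<P_{-,0}<1$ by Lemma \ref{lemmap}, the coefficient $P_{-,0}-1$ of $d^{2}$ on the first branch is negative, so that branch is concave and has an interior maximizer; a direct first-order condition gives $d^{*}=\frac{z-xP_{-,0}}{1-P_{-,0}}$, and since $z>x$ one checks $d^{*}>x$, so $d^{*}$ genuinely lies on the first branch. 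On the other branch ($d\le x$) the function is dominated (its values stay below $-(d-z)^{2}\le -(x-z)^{2}<$ the value at $d^{*}$, using $P_{+,0}>0$ and a short estimate), so the global supremum is attained at $d^{*}$ and equals $\frac{P_{-,0}}{1-P_{-,0}}(z-x)^{2}$. This is the routine computation already displayed before the theorem statement; I would simply make the concavity/comparison argument explicit.

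\textbf{Step 2: optimality of the feedback control.} By \eqref{duality}, $\inf_{\pi\in\Pi_{z}}\mathrm{Var}(X_{T}^{\pi})=\sup_{d}[V(0,x;d)-(d-z)^{2}]=V(0,x;d^{*})-(d^{*}-z)^{2}$. The standard Lagrangian argument (as in \cite{ZL,LZL}) then says: a feasible $\pi\in\Pi_{z}$ is MV-optimal provided $\pi$ is optimal for the unconstrained problem \eqref{optmun} with the particular multiplier $d^{*}$ \emph{and} the associated terminal wealth satisfies $\E[X_{T}^{\pi}]=z$, i.e.\ the constraint is active with this multiplier. Proposition \ref{verifi} tells us the feedback $\pi^{*}(t,X)=\hat v_{+}(t,P_{+,t},P_{-,t})(X_{t-}-d^{*})^{+}+\hat v_{-}(t,P_{+,t},P_{-,t})(X_{t-}-d^{*})^{-}$ is optimal for \eqref{optmun} with $d=d^{*}$ and (from the proof there) that $Y_{t}=X_{t}^{\pi^{*}}-d^{*}$ solves a linear SDE with bounded Lipschitz coefficients, hence $\pi^{*}\in\mathcal U$. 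It remains to check $\E[X_{T}^{\pi^{*}}]=z$: taking expectations in the $Y$-SDE, $\E[Y_{t}]$ solves a linear scalar ODE driven by $Y_{t-}^{+}\hat v_{+,t}^{\top}\mu_{t}+Y_{t-}^{-}\hat v_{-,t}^{\top}\mu_{t}$; one shows that necessarily $\E[X_{T}^{\pi^{*}}]=d^{*}+(x-d^{*})\,(\text{something})$ and, using $x-d^{*}=\frac{x-z}{1-P_{-,0}}$ together with the identity $P_{-,0}=1+\int_{0}^{T}H_{-}^{*}\,dt$, that this equals exactly $z$. Alternatively, and more cleanly, one invokes the general principle that at the saddle point the duality gap is zero: since the sup over $d$ is attained at an interior point $d^{*}$ of a differentiable branch, the envelope/first-order condition forces $\E[X_{T}^{\pi^{*}}]=z$, so $\pi^{*}\in\Pi_{z}$ and it attains the infimum of the variance. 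I would present this envelope-theorem version to avoid a tedious ODE computation.

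\textbf{Step 3: the efficient frontier.} With $\pi^{*}$ MV-optimal for the level $z$, Step 1 gives $\mathrm{Var}(X_{T}^{\pi^{*}})=\frac{P_{-,0}}{1-P_{-,0}}(z-x)^{2}$, and since $\E[X_{T}^{\pi^{*}}]=z$ this is precisely $\mathrm{Var}(X_{T}^{\pi^{*}})=\frac{P_{-,0}}{1-P_{-,0}}(\E[X_{T}^{\pi^{*}}]-x)^{2}$ for every $z\ge x$; as $z$ ranges over $[x,\infty)$ the pairs $(\sqrt{\mathrm{Var}},z)$ trace out a half-line emanating from $(0,x)$ with slope $\sqrt{(1-P_{-,0})/P_{-,0}}$. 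For the degenerate case $z=x$ one notes $d^{*}=x$, the feedback is identically $0$, and $\mathrm{Var}=0$, consistent with the formula.

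\textbf{Main obstacle.} The genuinely delicate point is Step 2: verifying that the multiplier $d^{*}$ \emph{activates} the constraint, i.e.\ that the candidate optimal wealth has $\E[X_{T}^{\pi^{*}}]=z$ rather than merely $\le z$ or $\ge z$. In the classical continuous case this is immediate from strong duality plus strict concavity, but here one must be a little careful because $V(0,x;\cdot)$ is only piecewise quadratic (not globally smooth) and because the feedback control is itself piecewise-linear and involves the local time of $Y$ at $0$; one has to be sure the maximizer lands in the interior of the smooth branch $d>x$ (which is where Lemma \ref{lemmap}, $P_{-,0}<1$, is indispensable) so that the envelope theorem applies and the duality gap vanishes. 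Everything else — the quadratic optimization in $d$, the admissibility of $\pi^{*}$, and reading off the frontier — is routine given Theorem \ref{existence} and Propositions \ref{homeprop}--\ref{verifi}.
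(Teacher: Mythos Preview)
Your proposal is correct and follows the same route as the paper: compute the maximizing $d^{*}$ from the explicit piecewise-quadratic form of $V(0,x;d)$ given by Proposition \ref{verifi}, invoke the Lagrange duality relation \eqref{duality}, and read off the efficient frontier. The paper's own argument is the short paragraph preceding the theorem together with the single sentence ``According to the Lagrange duality relationship \eqref{duality}, the above analysis boils down to\ldots'', so your Steps 1--3 are exactly the paper's plan made explicit.

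Where you go further than the paper is in Step 2 and your ``Main obstacle'': the paper does not separately verify $\E[X_{T}^{\pi^{*}}]=z$ but simply appeals to Lagrange duality. Your envelope-theorem justification is correct and cleaner than a direct ODE computation: since $g(d):=V(0,x;d)-(d-z)^{2}\le h(d):=\E[(X_{T}^{\pi^{*}}-d)^{2}]-(d-z)^{2}$ with equality at $d^{*}$, and $h$ is affine in $d$ with slope $2(z-\E[X_{T}^{\pi^{*}}])$, the first-order condition $g'(d^{*})=0$ on the smooth branch $d>x$ forces $h'(d^{*})=0$, hence $\E[X_{T}^{\pi^{*}}]=z$. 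This is a genuine (if standard) detail that the paper omits, and Lemma \ref{lemmap} is indeed what guarantees $d^{*}$ lands in the interior of that smooth branch.
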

Since the interest rate is deterministic, the efficient frontier is as expected a half-line.
When the interest rate is random, then the minimum risk cannot be reduced to 0, so the efficient frontier is no more a half-line; see, e.g., \cite{HSX}. Of course, the HJB question approach is hardly to apply in that case.

\section{Proof of Theorem \ref{existence}.}\label{sec:Solvability}

This whole section is devoted to the proof of Theorem \ref{existence}. We shall use $c$ to represent a generic positive constant which can be different from line to line.

For any $(P_{+},P_{-})\in (0,\infty)\times (0,\infty)$, by Assumption \ref{assu1}, there are constants $c_1(P_{+},P_{-})>0$ and $c_2(P_{+},P_{-})>0$ such that
\begin{align}\label{bound}
H_{\pm}(t,v,P_{+},P_{-})\geq c_1|v|^2-c_2(|v|+1).
\end{align}
Hence $H_{\pm}(t,v,P_{+},P_{-})>0\geq H_{\pm}^*(t,P_{+},P_{-})$ whenever
$|v|$ is sufficiently large in terms of $c_{1}$ and $c_{2}$. 
Therefore, $H_{\pm}^*(t,P_{+},P_{-})$ are finite and locally Lipschitz w.r.t. $(P_{+},P_{-})$.

Assumption \ref{assu1} implies there is a constant $\alpha$ such that
$0<\alpha<\exp\big(-\int_0^T \mu_s^{\top}\Sigma_s^{-1}\mu_s\ds\big)$.
Let $g(P)=\alpha\vee(P\wedge 1)$. Then both $H_{\pm}^*(t,g(P_{+}),g(P_{-}))$ are Lipschitz continuous w.r.t. $(P_{+},P_{-})$ on $\R^{2}$, so the ODE 
\begin{align}\label{Riccatitrun}
\begin{cases}
\dot{P}_{+,t}=-H_{+}^*(t,g(P_{+,t}),g(P_{-,t})),\\
\dot{P}_{-,t}=-H_{-}^*(t,g(P_{+,t}),g(P_{-,t})),\\
P_{\pm,T}=1,
\end{cases}
\end{align}
admits a unique classical solution, denoted by $(\tilde P_{+},\tilde P_{-})$.
Since $$H_{\pm}^*(t,g(\tilde P_{+,t}),g(\tilde P_{-,t}))\leq H_{\pm}(t,0,g(\tilde P_{+,t}),g(\tilde P_{-,t}))=0$$ for all $t\in[0,T]$, we must have $\tilde P_{\pm,t}\leq \tilde P_{\pm,T}=1$.
If we can prove
\begin{align}\label{lower}
\tilde P_{\pm,t}\geq \alpha, \ t\in[0,T],
\end{align}
then $(\tilde P_{+},\tilde P_{-})$ is actually a positive solution to the ODE \eqref{Riccati} since $g(\tilde P_{\pm,t})=\tilde P_{\pm,t}$ in \eqref{Riccatitrun}.

To prove the estimate \eqref{lower},
we notice $\underline P_t=\exp\big(-\int_t^T \mu_s^{\top}\Sigma_s^{-1}\mu_s\ds)$, $t\in[0,T]$ satisfies the following ODE
\begin{align*}
\begin{cases}
\dot{\underline P}_t=\underline P_{t}\mu_t^{\top}\Sigma_t^{-1}\mu_t,\\
\underline P_T=1.
\end{cases}
\end{align*}
Clearly, $\alpha\leq\underline P_t\leq 1$, so we have
\begin{align}
-\underline P_{t}\mu_t^{\top}\Sigma_t^{-1}\mu_t&=\inf_{v\in\R^m}H_{+}(t,v,\underline P_{t},\underline P_{t})\leq \inf_{v\in\R^m_+}H_{+}(t,v,\underline P_{t},\underline P_{t})=H_{+}^*(t,g(\underline P_{t}),g(\underline P_{t})).
\end{align}
It follows from the chain rule that
\begin{align*}\label{chain}
[(\underline P_t-\tilde P_{+,t})^+]^2&=\int_t^T 2(\underline P_s-\tilde P_{+,s})^+\Big[-\underline P\mu_s^{\top}\Sigma_s^{-1}\mu_s-H_{+}^*(s,g(\tilde P_{+,s}),g(\tilde P_{-,s}))\Big]\ds\nn\\
&\leq \int_t^T 2(\underline P_s-P_{+,s})^+\Big[H_{+}^*(s,g(\underline P_{s}),g(\underline P_{s}))-H_{+}^*(s,g(\tilde P_{+,s}),g(\tilde P_{-,s}))\Big]\ds.
\end{align*}

On the other hand, using \eqref{bound}, we have
\begin{align*}\label{bounddomain}
H_{+}^*(t,g(\tilde P_{+,t}),g(\tilde P_{-,t}))=\inf_{v\in\R^m_+,|v|\leq c}H_{+}(t,v,g(\tilde P_{+,t}),g(\tilde P_{-,t})).
\end{align*}
Observe that $H_{+}(t,v,P_{+},P_{-})$ is non-decreasing w.r.t. $P_{-}$, we have
\begin{align*}
&\quad\;[(\underline P_t-\tilde P_{+,t})^+]^2\\
&\leq \int_t^T 2(\underline P_s-\tilde P_{+,s})^+\Big[\sup_{v\in\R^m_+,|v|\leq c}\Big(H_{+}(s,v,g(\underline P_{s}),g(\underline P_{s}))-H_{+}(s,v,g(\tilde P_{+,s}),g(\tilde P_{-,s}))\Big)\Big]\ds\\
&\leq c\int_t^T (\underline P_s-\tilde P_{+,s})^+\Big[|\underline P_s-\tilde P_{+,s}|+(\underline P_s-\tilde P_{-,s})^+\Big]\ds\nn\\
&\leq c\int_t^T[(\underline P_s-\tilde P_{+,s})^+]^2+[(\underline P_s-\tilde P_{-,s})^+]^2\ds.
\end{align*}
Similarly, we have
\begin{align*}
[(\underline P_t-\tilde P_{-,t})^+]^2
&\leq c\int_t^T[(\underline P_s-\tilde P_{+,s})^+]^2+[(\underline P_s-\tilde P_{-,s})^+]^2\ds.
\end{align*}
Combining the above two inequalities gives
\begin{align*}
[(\underline P_t-\tilde P_{+,t})^+]^2+[(\underline P_t-\tilde P_{-,t})^+]^2
&\leq c\int_t^T[(\underline P_s-\tilde P_{+,s})^+]^2+[(\underline P_s-\tilde P_{-,s})^+]^2\ds.
\end{align*}
We infer from Gronwall's inequality that $[(\underline P_t-\tilde P_{+,t})^+]^2+[(\underline P_t-\tilde P_{-,t})^+]^2=0$, which implies $\tilde P_{\pm,t}\geq \underline P_t\geq \alpha$, establishing \eqref{lower}.

Suppose \eqref{Riccati} admits two positive solutions $(P_{+}, P_{-})$ and $(P'_{+}, P'_{-})$. Since $H^{*}_{\pm}\leq 0$, both of the solutions are increasing. Hence, there exists a sufficiently small constant $\alpha>0$ such that $\alpha\leq P_{\pm},P'_{\pm}\leq 1$. Then both $(P_{+}, P_{-})$ and $(P'_{+}, P'_{-})$ are solutions to the ODE \eqref{Riccatitrun} with Lipschitz driver, so they must be equal.

The existence and boundedness of $\hat v_{\pm}(t,P_{+,t},P_{-,t})$ follow from \eqref{bound} and
the boundedness of $P_{\pm,t}$.

\section{Examples with explicit solutions to \eqref{Riccati}.}\label{sec:examples}
In this section, we present some special cases in which the ODE \eqref{Riccati} can be decoupled.

\subsection{Case $\beta_{ij}\geq0$.}
If $\beta_{ij}\geq 0$ for all $i,j$, then $1+v^{\top}\beta_{j}\geq 0$ for any $v\in\R^m_+$. Moreover,
\begin{align*}
H_{+}^*(t,P_{+},P_{-})&=\inf_{v\in\R^m_+}H_{+}(t,v,P_{+},P_{-})=P_{+}\inf_{v\in\R^m_+}\Big(v^{\top}\Sigma_t v+2\mu_t^{\top} v\Big)
\end{align*}
is linear w.r.t. $P_{+}$ and independent of $P_{-}$. In this case, the two-dimensional ODE \eqref{Riccati} is partially decoupled. One can first solve $P_{+}$ and then $P_{-}$.
If, furthermore, one has $\mu_{i}\geq 0$ for all $i$, then
\begin{align}
H_{+}^*(t,P_{+},P_{-})\equiv 0, ~~~ P_{+,t}\equiv 1, ~~~ \hat v_{+}(t,P_{+,t},P_{-,t})\equiv0.
\end{align}

\begin{assmp}\label{assu2}
All the coefficients $\mu$, $\sigma$, $\beta$ are time-invariant and positive.
Meanwhile, $m=n=\ell=1$, $\cE=\{1\}$ and $\nu(\cE)=1$.
\end{assmp}
This assumption says that there is only one risky asset, both the Brownian motion $W$ and the Poisson random measure $N$ are one-dimensional, and the Poisson random measure $N$ degenerates to a Poisson process with intensity $1$.
Let Assumption \ref{assu2} hold in the remaining of this subsection.

We now obtain analytic expressions for $H_{-}(v,P_{+},P_{-})$, $H^*_{-}(P_{+},P_{-})$, $\hat v_{-}(P_{+},P_{-})$ (here we omit the argument $t$ since they are time independent). In this case,
\begin{align*}
H_{-}(v,P_{+},P_{-})&=P_{-}\sigma^2v^2-2P_{-}\mu v+P_{-}\Big[\big[(1-\beta v)^+\big]^2-1+2\beta v\Big]+\big[(1-\beta v)^{-}\big]^2\\[3pt]
&=\begin{cases}
P_{-}(\sigma^2+\beta^2)v^2-2P_{-}\mu v, &\ \mbox{if} \ 0\leq v\leq \frac{1}{\beta};\bigskip\\
(P_{-}\sigma^2+\beta^2)v^2-2(P_{-}\mu-P_{-}\beta+\beta)v+1-P_{-}, & \ \mbox{if} \ v\geq \frac{1}{\beta}.
\end{cases}
\end{align*}
A simple calculation shows
\begin{align*}
&\quad \inf_{0\leq v\leq \beta^{-1}}\Big[P_{-}(\sigma^2+\beta^2)v^2-2P_{-}\mu v\Big]\\[3pt]
&=
\begin{cases}
H_{-}(\frac{1}{\beta},P_{+},P_{-} )=-\frac{2\beta\mu-\sigma^2-\beta^2}{\beta^2}P_{-}, & \mbox{if} \ \sigma^2+\beta^2\leq \beta\mu;\bigskip\\
H_{-}(\frac{\mu}{\sigma^2+\beta^2},P_{+},P_{-} )=-\frac{\mu^2}{\sigma^2+\beta^2}P_{-}, &\mbox{if} \ \sigma^2+\beta^2\geq \beta\mu,
\end{cases}
\end{align*}
and
\begin{align*}
&\quad \inf_{v\geq \beta^{-1}}\Big[(P_{-}\sigma^2+\beta^2)v^2-2(P_{-}\mu-P_{-}\beta+\beta)v+1-P_{-}\Big]\\[3pt]
&=
\begin{cases}
H_{-}(\frac{P_{-}\mu-P_{-}\beta+\beta}{P_{-}\sigma^2+\beta^2},P_{+},P_{-} )=-\frac{(\mu P_{-}-\beta P_{-}+\beta)^2}{\sigma^2P_{-}+\beta^2}+1-P_{-}, &\ \mbox{if} \ \sigma^2+\beta^2\leq \beta\mu;\bigskip\\
H_{-}(\frac{1}{\beta},P_{+},P_{-} )=-\frac{2\beta\mu-\sigma^2-\beta^2}{\beta^2}P_{-}, & \ \mbox{if} \ \sigma^2+\beta^2\geq \beta\mu.
\end{cases}
\end{align*}
Therefore,
\begin{align*}
H_{-}^*(P_{+},P_{-})&=\min\Big\{\inf_{0\leq v\leq \beta^{-1}}\Big[P_{-}(\sigma^2+\beta^2)v^2-2P_{-}\mu v\Big],\\
&\qquad\qquad\;\inf_{v\geq \beta^{-1}}\Big[(P_{-}\sigma^2+\beta^2)v^2-2(P_{-}\mu-P_{-}\beta+\beta)v+1-P_{-}\Big]\Big\}\bigskip\\
&=
\begin{cases}
H_{-}(\frac{P_{-}\mu-P_{-}\beta+\beta}{P_{-}\sigma^2+\beta^2},P_{+},P_{-} )=-\frac{(\mu P_{-}-\beta P_{-}+\beta)^2}{\sigma^2P_{-}+\beta^2}+1-P_{-}, \\
\hspace{14em} \mbox{if} \ \sigma^2+\beta^2\leq \beta\mu
\Longleftrightarrow \frac{P_{-}\mu-P_{-}\beta+\beta}{P_{-}\sigma^2+\beta^2}\geq \frac{1}{\beta};\bigskip\\
H_{-}(\frac{\mu}{\sigma^2+\beta^2},P_{+},P_{-} )=-\frac{\mu^2}{\sigma^2+\beta^2}P_{-}, \\ \hspace{14em}\mbox{if} \ \sigma^2+\beta^2\geq \beta\mu\Longleftrightarrow \frac{\mu}{\sigma^2+\beta^2}\leq \frac{1}{\beta},
\end{cases}
\end{align*}
and
\begin{align*}
\hat v_{-}(P_{+},P_{-})=
\begin{cases}
\frac{P_{-}\mu-P_{-}\beta+\beta}{P_{-}\sigma^2+\beta^2}, &\ \mbox{if} \ \sigma^2+\beta^2\leq \beta\mu;\\
\frac{\mu}{\sigma^2+\beta^2}, &\ \mbox{if} \ \sigma^2+\beta^2\geq \beta\mu.
\end{cases}
\end{align*}

Note that $\hat v_{+}(P_{+},P_{-})\equiv 0$ under Assumption \ref{assu2}, so according to Theorem \ref{Th:efficient},
the efficient feedback portfolio is reduced to
\begin{align}\label{pistar2}
\pi^*(t,X)=\hat v_{-}(P_{+,t},P_{-,t})(X_{t-}-d^* )^-.
\end{align}

Substituting \eqref{pistar2} into \eqref{wealth}, the optimal state process $X$ follows
\begin{align}\label{shiftedstate}
\begin{cases}
\dd \ (X_t-d^*)=(X_{t-}-d^*)^-\hat v_{-} (\mu \dt+\sigma \dw_t+\beta \dd\tilde N_t),\\
X_0-d^*=x-d^*<0,
\end{cases}
\end{align}
where $\hat v_{-}=\hat v_{-}(P_{+},P_{-})$.

\begin{description}
\item[Case $\sigma^2+\beta^2< \beta\mu$.]
In this case, 
$1-\beta \hat v_{-}=1-\beta\frac{P_{-}\mu-P_{-}\beta+\beta}{P_{-}\sigma^2+\beta^2}
=\frac{\sigma^2+\beta^2-\beta\mu}{P_{-}\sigma^2+\beta^2}P_{-}< 0$.
Denote by $\tau_k$ the $k$-th jump time of the Poisson process $N$ with the convention that $\tau_0=0$. Then the unique solution of \eqref{shiftedstate} is given by
\begin{align*}
X_t-d^*&=(x-d^*)(1-\beta\hat v_{-})^{2k}\Big(\prod_{i=0}^{k-1}L_{\tau_{2i},\tau_{2i+1}}\Big)\\
&\qquad\times\bigg[L_{\tau_{2k}, t}\mathbf{1}_{\tau_{2k}\leq t<\tau_{2k+1}} 
+(1-\beta\hat v_{-})L_{\tau_{2k}, \tau_{2k+1}}\mathbf{1}_{\tau_{2k+1}\leq t<\tau_{2k+2}}\bigg],
\end{align*}
where
\begin{align}\label{L}
L_{s,t}:=\exp\Big(-(\mu\hat v_{-}-\beta\hat v_{-}+\frac{1}{2}\sigma^2\hat v_{-}^2)(t-s)-\sigma\hat v_{-}(W_t-W_s)\Big), \ s\leq t.
\end{align}
For each $k=0,1,\ldots$, we have $X_t-d^*<0$ for $t\in[\tau_{2k},\tau_{2k+1})$, and $X_t-d^*>0$ is constant for $t\in[\tau_{2k+1},\tau_{2k+2})$. That is to say, at every jump time $\tau_{k}$ of the Poisson process $N$, the process $X_t-d^*$ changes its sign.
Therefore, the optimal portfolio is
\begin{align*}
\pi^*(t,X)=-\frac{P_{-,t}\mu-P_{-,t}\beta+\beta}{P_{-,t}\sigma^2+\beta^2}
(X_{t-}-d^*)\mathbf{1}_{t\in\cup_{k=0}^{\infty}[\tau_{2k},\tau_{2k+1})}, \ t\in[0,T].
\end{align*}
Figure \ref{figure:x} demonstrates the dynamics $X_{t}-d^{*}$ in the case $\sigma^2+\beta^2< \beta\mu$.
\begin{figure}[H]
\begin{minipage}[t]{\textwidth}
\centering
\includegraphics[width=0.75\linewidth]{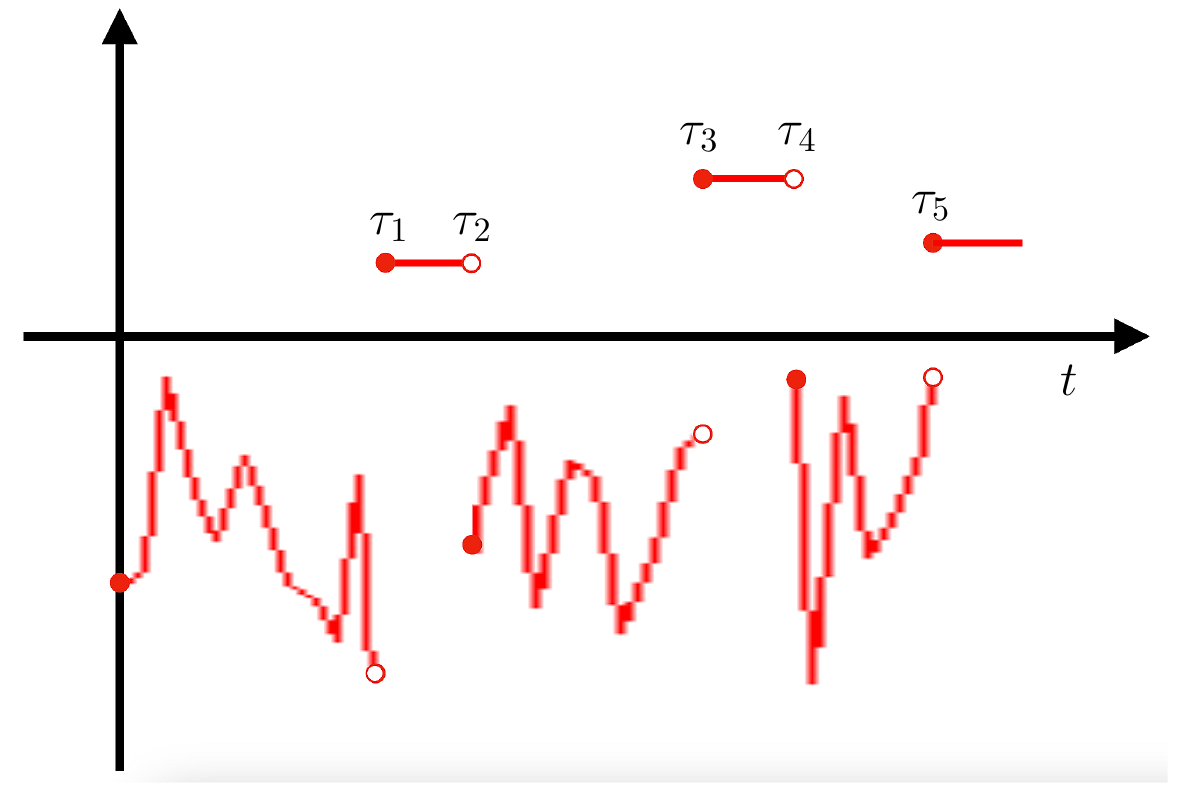}
\caption{The dynamics $X_{t}-d^{*}$ in \eqref{shiftedstate} in the case $\sigma^2+\beta^2< \beta\mu$.}\medskip\label{figure:x}
\end{minipage}
\end{figure}
\item[Case $\sigma^2+\beta^2\geq \beta\mu$.]
In this case we have $1-\beta \hat v=1-\beta\frac{\mu}{\sigma^2+\beta^2}\geq 0$, then
\begin{align*}
X_t-d^*=(x-d^*)L_{0,t}\prod_{0<s\leq t}((1-\beta \hat v_{-})\Delta N_s)\leq0, \ t\geq 0,
\end{align*}
so the optimal portfolio is
\begin{align*}
\pi^*(t,X)=-\frac{\mu}{\sigma^2+\beta^2}(X_{t-}-d^*)\geq 0, \ t\in[0,T].
\end{align*}

\end{description}

\subsection{Case $-1<\beta_{ij}\leq0$.}

If $-1<\beta_{ij}\leq 0$ for all $i,j$, then $1-v^{\top}\beta_{j}\geq 0$ for any $v\in\R^m_+$. Moreover,
\begin{align*}
H_{-}^*(t,P_{+},P_{-})&=\inf_{v\in\R^m_+}H_{-}(t,v,P_{+},P_{-})=P_{-}\inf_{v\in\R^m_+}\Big(v^{\top}\Sigma_t v-2\mu_t^{\top} v\Big)
\end{align*}
is linear w.r.t. $P_{-}$, independent of $P_{+}$. In this case, the two-dimensional ODE \eqref{Riccati} is partially decoupled. One can first solve $P_{-}$ and then $P_{+}$. And
$$\hat v_{-}(t,P_{+},P_{-})=\argmin_{v\in\R^m_+} \Big(v^{\top}\Sigma_t v-2\mu_t^{\top} v \Big),$$
is independent of $(P_{+},P_{-})$.

\begin{assmp}\label{assu3}
Same as Assumption \ref{assu2} except for $\beta>0$ replaced by $-1<\beta\leq 0$.
\end{assmp}
Let Assumption \ref{assu3} hold in the remaining of this subsection.

In this case,
\begin{align*}
\frac{\partial H_{+}(v,P_{+},P_{-})}{\partial v}=2P_{+}\sigma^2v+2P_{+}\mu+P_{+}[2\beta(1+\beta v)^+-2\beta]-2P_{-}\beta(1+\beta v)^{-}
\end{align*}
is nondecreasing w.r.t $v$. Therefore for any $v\in\R_+$,
\begin{align*}
\frac{\partial H_{+}(v,P_{+},P_{-})}{\partial v}\geq\frac{\partial H_{+}(v,P_{+},P_{-})}{\partial v}\Big|_{v=0}=2P_{+}\mu>0,
\end{align*}
we know that $H_{+}(v,P_{+},P_{-})$ is nondecreasing w.r.t $v\in\R_+$.
Therefore
\begin{align*}
H_{+}^*(P_{+},P_{-})=H_{+}(0,P_{+},P_{-})=0,
\end{align*}
and
\begin{align*}
P_{+,t}\equiv1, ~~~\hat v_{+}(P_{+,t},P_{-,t})\equiv0.
\end{align*}

On the other hand, we have
\begin{align*}
H_{-}(v,P_{+},P_{-})&=P_{-}\sigma^2v^2-2P_{-}\mu v+P_{-}[((1-\beta v)^+)^2-1+2\beta v]\\
&=P_{-}[(\sigma^2+\beta^2)v^2-2\mu v],
\end{align*}
as $1-\beta v\geq0$ for any $v\geq0$.
Moreover
\begin{align*}
H_{-}^*(P_{+},P_{-})=\inf_{v\geq0}H_{-}(v,P_{+},P_{-})=H_{-}\Big(\frac{\mu}{\sigma^2+\beta^2},P_{+},P_{-}\Big)
=-\frac{\mu^2}{\sigma^2+\beta^2}P_{-},
\end{align*}
and
\begin{align}\label{hatv2ex}
\hat v_{-}(P_{+},P_{-})=\frac{\mu}{\sigma^2+\beta^2}.
\end{align}

According to Theorem \ref{Th:efficient},
the efficient feedback portfolio turns to
\begin{align}\label{pistar3}
\pi^*(t,X)=\frac{\mu}{\sigma^2+\beta^2}(X_{t-}-d^*)^-.
\end{align}
Substituting \eqref{pistar3} into \eqref{wealth}, we obtain
\begin{align*}
\begin{cases}
\dd \ (X_t-d^*)=(X_{t-}-d^*)^-(\mu \hat v_{-}\dt+\sigma \hat v_{-}\dw_s+\beta \hat v_{-}\dd\tilde N_t),\\
X_0-d^*=x-d^*<0.
\end{cases}
\end{align*}
Since $1-\beta\hat v_{-}=1-\beta\frac{\mu}{\sigma^2+\beta^2}> 0$, we see
\begin{align*}
X_t-d^*=(x-d^*)L_{0,t}\prod_{0<s\leq t}((1-\beta \hat v_{-})\Delta N_s)<0, ~~t\geq 0,
\end{align*}
where $L$ and $\hat v_{-}$ are given in \eqref{L} and \eqref{hatv2ex} respectively.

\newpage

\end{document}